\patchcmd{\@settitle}{\uppercasenonmath\@title}{}{}{}
\patchcmd{\@setauthors}{\MakeUppercase}{}{}{}
\patchcmd{\section}{\scshape}{}{}{}
\theoremstyle{plain}
 \newtheorem{theorem}{Theorem}[section]
 \newtheorem{lemma}[theorem]{Lemma}
 \newtheorem{proposition}[theorem]{Proposition}
\theoremstyle{definition}
 \newtheorem{definition}[theorem]{Definition}
\theoremstyle{remark}
\DeclareMathOperator{\Hess}{Hess}
\DeclareMathOperator{\tr}{tr}
\DeclareMathOperator{\diag}{diag}
\numberwithin{equation}{section}
\renewcommand{\labelenumi}{{\rm(\theenumi)}}
\numberwithin{equation}{section}
\begin{document}
\title[Dirichlet heat flow on Riemannian manifolds]{\large Non-preservation of concavity properties\\ by the Dirichlet heat flow on Riemannian manifolds}
\author[]{Kazuhiro Ishige, Asuka Takatsu, and Haruto Tokunaga}
\date{}
\maketitle
\begin{abstract}
We prove that
no concavity properties are preserved  by the Dirichlet heat flow in a totally convex domain of a Riemannian manifold
unless the sectional curvature vanishes everywhere on the domain.
\end{abstract}
\vspace{10pt}

\vspace{15pt}
\noindent 
Addresses:

\smallskip
\noindent 
K.I.: Graduate School of Mathematical Sciences, The University of Tokyo,\\ 
3-8-1 Komaba, Meguro-ku, Tokyo 153-8914, Japan\\
\noindent 
E-mail: {\tt ishige@ms.u-tokyo.ac.jp}\\

\smallskip
\noindent 
A.T.: Department of Mathematical Sciences, Tokyo Metropolitan University,\\
1-1 Minami-osawa, Hachioji-shi, Tokyo 192-0397, Japan\\
\noindent 
E-mail: {\tt asuka@tmu.ac.jp}\\

\smallskip
\noindent 
H.T.: Graduate School of Mathematical Sciences, The University of Tokyo,\\ 
3-8-1 Komaba, Meguro-ku, Tokyo 153-8914, Japan\\
\noindent 
E-mail: {\tt okunagasp10@gmail.com}\\

\noindent 
2020 AMS Subject Classifications: 35E10, 58J35.

\noindent
Keywords: Dirichlet heat flow, concavity properties

\newpage
\section{Introduction}
Let $M$ be a complete, connected, and smooth Riemannian manifold without boundary of dimension at least $2$
and $\Omega$ a non-empty totally convex domain of $M$.
Consider the Cauchy--Dirichlet problem for the heat equation 
\begin{equation}
\tag{H}
\label{eq:H}
\begin{dcases}
\frac{\partial}{\partial t}u=\Delta_M u & \text{in}\quad\Omega\times(0,\infty),\\
u=0\quad & \text{in}\quad\partial\Omega\times(0,\infty)\text{ if }\partial\Omega\not=\emptyset,\\
u(\cdot,0)=\phi & \text{in}\quad\Omega,
\end{dcases}
\end{equation}
where $\phi$ is a nonnegative $L^\infty(\Omega)$-function.
Problem~\eqref{eq:H} possesses a unique (nonnegative and minimal) solution 
\[
u\in C^{2,1}(\Omega\times(0,\infty))\cap C(\overline{\Omega}\times(0,\infty))
\text{ satisfying }
\lim_{t\to +0} \|u(\cdot, t)-\phi(\cdot)\|_{L^2(\Omega\,\cap\, K)}=0
\]
for all compact sets $K$ of $M$ (see e.g.\,\cite{LSU}*{Chapters~III and IV}).
We call the solution~$u$ the \emph{Dirichlet heat flow} (which is abbreviated as DHF) in $\Omega$.
This paper is concerned with the preservation of concavity properties by DHF.

The preservation of concavity properties by DHF is a classical subject and it has fascinated many mathematicians 
since the pioneering work by Brascamp and Lieb~\cite{BL}. 
They found the preservation of log-concacvity by DHF in $\mathbb{R}^n$ via the Pr\'ekopa--Leindler inequality.
Subsequently, the preservation of log-concavity by DHF in convex domains of $\mathbb{R}^n$ was elaborated as follows
(see e.g.\,\cites{GK, INS, Keady, Kor}).
\begin{itemize}
\setlength{\leftskip}{-10pt}
\item
Let $u$ be a solution to problem~\eqref{eq:H} in a convex domain of $\mathbb{R}^n$.
If $\log\phi$ is concave in the domain, then $\log u(\cdot,t)$ is concave in the domain for every $t>0$.
\end{itemize}
Here and in what follows, we adhere to the convention that $\log 0\coloneqq -\infty$. 
The preservation of concavities properties on ${\mathbb R}^n$  has been studied   for various parabolic equations such as 
nonlinear heat equations~\cites{GK, INS, Ken02}; the porous medium equation~\cites{BV, DH1, DH2, DHL}; the evolution of $p$-Laplace operator~\cite{L}; 
fully nonlinear nonlocal parabolic equations~\cite{KLM}. 
See also \cites{Kawohl01, Kawohl, Kawohl02, Kor} for related topics.
In contrast, the non-preservation of concavity properties in convex domains of $\mathbb{R}^n$ was obtained 
in the following cases: 
the heat equation~\cites{CW1, IS01, IS02}; the heat equation with variable coefficients~\cites{AI, Kol}; 
the one-phase Stefan problem~\cite{CW2}; the porous medium equation~\cites{CW3, IS02}; 
the quasi-static droplet model~\cite{CW5}. 
Recently, in \cite{IST05}, 
the notion of $\overline{F}$-concavity was introduced as the largest available generalization of the notion of concavity on $\mathbb{R}^n$, 
and $\overline{F}$-concavity preserved by DHF in convex domains of $\mathbb{R}^n$ was characterized completely. 

Compared with the Euclidean space, much less is known about the preservation of concavity properties by DHF on Riemannian manifolds, even of space forms, 
and the following question naturally arises. 
\makeatletter\tagsleft@true\makeatother
\begin{equation}
\tag{Q}
\label{eq:Q}
\begin{split}
 & \mbox{Are there any concavity properties preserved by DHF in a totally convex domain}\\
 & \mbox{of a Riemannian manifold?} 
\end{split}
\end{equation}
\makeatletter\tagsleft@false\makeatother
Concerning this question, Shih~\cite{Shih}*{Theorem~1.5} found a totally convex domain of $\mathbb{H}^2$ 
whose first Dirichlet eigenfunction  of $-\Delta_{\mathbb{H}^2}$  is not quasi-concave.
Since quasi-concavity is the weakest conceivable concavity property,  this result together  with the Fourier expansion of DHF implies that no concavity properties are preserved by DHF in this domain.
Note that the first Dirichlet eigenfunctions in bounded convex domains of~$\mathbb{R}^n$ are log-concave 
due to the preservation of log-concavity by DHF, 
and those in totally convex domains of~${\mathbb S}^n$ are also log-concave (see e.g.~\cites{LW}).
We also refer to \cite{IST02}*{Theorem~1.2} for the preservation of log-concavity by {rotationally symmetric} DHF on a Riemannian manifold.

In this paper we investigate the preservation of concavity properties by DHF in~$\Omega$ to obtain our answer to question~\eqref{eq:Q}.  
Surprisingly, the answer is that no concavity properties are preserved by DHF in~$\Omega$ unless the sectional curvature vanishes everywhere on~$\Omega$ 
(see Theorem~\ref{theorem:1.4}). 

We introduce some notation to state our answer precisely. 
We first recall the definition of $F$-concavity.
\begin{definition}
\label{theorem:1.1}
Let $a\in(0,\infty]$.
\begin{itemize}
\setlength{\leftskip}{-10pt}
\item[{\rm (1)}]
A function~$F\colon [0,a)\to [-\infty,\infty)$ is
\emph{admissible} on $[0,a)$ if $F\in C((0,a))$, 
$F$ is strictly increasing on $[0,a)$, and $F(0)=-\infty$.
\item[{\rm (2)}]
Let $F$ be admissible on $[0,a)$.
Define
$$
\mathcal {A}_\Omega(a)\coloneqq \{f\colon \Omega \to \mathbb{R} \mid f(\Omega) \subset [0,a)\}.
$$
For $f\in\mathcal {A}_\Omega(a)$,
we say that $f$ is \emph{$F$-concave} in $\Omega$
if $F\circ f$ is concave in $\Omega$, more precisely, 
for any minimal geodesic $c\colon [0,1]\to \Omega$, 
\begin{align}\label{eq:1.1}
F(f(c_\tau))
\geq
 (1-\tau) F(f(c_0))+\tau F(f(c_1))
 \quad \text{for all $\tau\in(0,1)$}.
\end{align}
We denote by ${\mathcal C}_\Omega[F]$ the set of $F$-concave functions in $\Omega$.
\item[{\rm (3)}]
For $i=1,2$, let $F_i$ be admissible on $[0,a_i)$ with $a_i\in(0,\infty]$ and $a\in (0,\min\{a_1,a_2\}]$.
We say that \emph{$F_1$-concavity is  weaker} (resp.\,\emph{strictly weaker}) \emph{than $F_2$-concavity}, 
or equivalently that \emph{$F_2$-concavity is stronger} (resp.\,\emph{strictly stronger})
\emph{ 
than $F_1$-concavity}, in $\mathcal{A}_\Omega(a)$ if
\[
\mathcal{C}_\Omega[F_2]\cap\mathcal{A}_\Omega(a)\subset\mathcal{C}_\Omega[F_1] 
\quad
\mbox{{\rm (}resp.\,\,$\mathcal{C}_\Omega[F_2]\cap{\mathcal A}_\Omega(a)\subsetneq\mathcal{C}_\Omega[F_1]\cap\mathcal{A}_\Omega(a)${\rm)}}.
\]
\end{itemize}
\end{definition}
As a typical example of $F$-concavity, 
we introduce $\alpha$-concavity, where $\alpha\in\mathbb{R}$. 
Define an admissible function $\Phi_\alpha$ on $[0,\infty)$ by 
\begin{equation*}
\Phi_\alpha(r)\coloneqq
\begin{dcases}
\frac{1}{\alpha}(r^\alpha-1) & \text{for $r>0$ if $\alpha\not=0$},\vspace{5pt}\\
\log r & \text{for $r>0$ if $\alpha=0$},\vspace{5pt}\\
-\infty & \text{for $r=0$}.
\end{dcases}
\end{equation*}
We refer to $\Phi_\alpha$-concavity as \emph{$\alpha$-concavity}.
Note that $0$-concavity corresponds to log-concavity. 
We also define $\pm\infty$-concavity as follows. 
\begin{definition}
\label{theorem:1.2}
A nonnegative function $f$ in $\Omega$ is 
\emph{$\infty$-concave} (resp.\,\emph{$-\infty$-concave}) if 
\[
 f(c_\tau )\ge\max\{f(c_0),f(c_1)\}
 \quad
 \mbox{(resp.~$f(c_\tau)\ge\min\{f(c_1),f(c_1)\}$)}
\]
for all $\tau \in (0,1)$ and  minimal geodesic $c\colon [0,1]\to \Omega$ with $f(c_0)$, $f(c_1)>0$.
\end{definition}
We often refer to $-\infty$-concavity as quasi-concavity. 
Power concavity is a generic term for $\alpha$-concavity with $\alpha\in[-\infty,\infty]$ 
and it possesses the following property via Jensen's inequality.
\begin{itemize} 
\setlength{\leftskip}{-10pt}
\item 
For any $\alpha, \beta \in [-\infty,\infty]$ with $\beta \leq \alpha$,
an $\alpha$-concave function in $\Omega$ is $\beta$-concave in $\Omega$.
\end{itemize}
This property establishes a hierarchy among power concavities 
and quasi-concavity is the weakest one while $\infty$-concavity is the strongest one.
Furthermore, an $F$-concave function is quasi-concave and an $\infty$-concave function is $F$-concave for every admissible function~$F$.
Thus quasi-concavity (resp.\,$\infty$-concavity) remains the weakest (resp.\,strongest) conceivable concavity.
Since both concavities do not posses any corresponding admissible functions,
we use the expression \emph{$\overline{F}$-concavity} when we consider all the $F$-concave functions jointly with quasi-concave and $\infty$-concave functions.
Then $\overline{F}$-concavity is the largest available generalization of concavity on~$\mathbb{R}^n$.

We define the notion of the preservation of $\overline{F}$-concavity by DHF.  
\begin{definition}
\label{theorem:1.3}
Denote by $e^{t\Delta_\Omega}\phi$ the unique nonnegative and minimal solution to problem~\eqref{eq:H}.
We say that \emph{$\overline{F}$-concavity is preserved by DHF in~$\Omega$} 
if, for any bounded and $\overline{F}$-concave function $\phi$ in~$\Omega$, 
$e^{t\Delta_\Omega}\phi$ is $\overline{F}$-concave in~$\Omega$ for every $t>0$.
\end{definition}

Note that the maximum principle for problem~\eqref{eq:H} implies that 
\[
\text{$e^{t\Delta_\Omega}\phi\in\mathcal{A}_\Omega(a)\cap L^\infty(\Omega)$ holds for every $t>0$ if $\phi\in\mathcal{A}_\Omega(a)\cap L^\infty(\Omega)$}, 
\]
where $a\in (0,\infty]$. 

Now we are ready to state our main result, which gives our negative answer to question~\eqref{eq:Q}.
\begin{theorem}
\label{theorem:1.4}
If some $\overline{F}$-concavity is preserved by DHF in $\Omega$,
then the sectional curvature is identically zero on $\Omega$. 
\end{theorem}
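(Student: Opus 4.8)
The plan is to argue by contradiction: assume some $\overline{F}$-concavity is preserved by DHF in $\Omega$, pick a point $p\in\Omega$ and a small geodesic ball around it, and show that preservation forces a pointwise curvature identity at $p$. The mechanism is a short-time asymptotic expansion of $e^{t\Delta_\Omega}\phi$ along a short minimal geodesic, testing the $F$-concavity inequality \eqref{eq:1.1} to second order in $t$. First I would reduce to a local statement: since total convexity lets us work inside a small totally convex geodesic ball $B=B(p,r)\subset\Omega$ where the boundary plays no role on the relevant time scale, and since DHF is local in the parabolic sense, it suffices to construct, for each $p$ where some sectional curvature is nonzero, an admissible initial datum $\phi$ (or a quasi-/$\infty$-concave one) that is $\overline{F}$-concave but whose evolution violates \eqref{eq:1.1} for small $t>0$.

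Next I would set up the test configuration. Fix a $2$-plane $\sigma\subset T_pM$ with sectional curvature $K(\sigma)\neq 0$, let $c:[-\delta,\delta]\to\Omega$ be the geodesic through $p=c_0$ with $\dot c_0\in\sigma$, and choose $\phi$ of the form $\phi=F^{-1}(\psi)$ with $\psi$ a smooth concave function on $B$ that is, say, affine in a neighborhood of the geodesic segment, so the concavity inequality for $F\circ\phi=\psi$ holds with equality along $c$ at $t=0$. The heart of the matter is the expansion
\[
e^{t\Delta_\Omega}\phi(x)=\phi(x)+t\,\Delta_M\phi(x)+\tfrac{t^2}{2}\,\Delta_M^2\phi(x)+o(t^2),
\]
valid locally by parabolic regularity, together with the fact that in normal coordinates at $p$ the Laplacian expands as $\Delta_M=\Delta_{\mathrm{euc}}+(\text{curvature terms})$, the leading correction being governed by the Ricci tensor (via $\sqrt{\det g}=1-\tfrac16\mathrm{Ric}_{ij}x^ix^j+\cdots$) and, at the next order relevant to the second geodesic derivative of $F\circ e^{t\Delta_\Omega}\phi$ at $p$, by the full curvature tensor. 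Plugging the expansion into $F(e^{t\Delta_\Omega}\phi(c_\tau))$ and taking $\partial_\tau^2|_{\tau=0}$, the $t^0$-term vanishes by the choice of $\psi$, the $t^1$-term is computed from $\partial_\tau^2(F'(\phi)\Delta_M\phi)$ at $p$, and one extracts a quantity that must be $\le 0$ for all admissible $\phi$ with $\psi$ affine near $c$ — but which, by the curvature correction to $\Delta_M$, picks up a term proportional to $K(\sigma)$ (indep.\ of the freedom left in $\phi$) that can be made to have either sign by rescaling, unless $K(\sigma)=0$.

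I would organize the computation so that it also covers quasi-concavity and $\infty$-concavity, which have no admissible $F$: there one tests the inequality $e^{t\Delta_\Omega}\phi(c_\tau)\ge\min\{\cdots\}$ (resp.\ $\ge\max\{\cdots\}$) directly with a datum that is, e.g., radially symmetric around $p$ in normal coordinates, a case in which the first nontrivial correction to the evolution is again curvature-driven and the relevant concavity statement reduces to a sign condition equivalent to $K(\sigma)=0$ on every plane. Combining the two analyses shows that preservation of any $\overline{F}$-concavity forces $K(\sigma)=0$ for every $p\in\Omega$ and every $2$-plane $\sigma\subset T_pM$, i.e., the sectional curvature vanishes identically on $\Omega$.

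The main obstacle I anticipate is bookkeeping in the asymptotic expansion: one needs the heat semigroup expansion to hold locally despite the Dirichlet boundary (handled by a cutoff and finite propagation-type estimates, or by comparison with the heat flow on a slightly larger closed manifold), and one must carry the curvature expansion of $\Delta_M$ and of $\Delta_M^2$ to high enough order that the coefficient of the decisive $t$-power in $\partial_\tau^2 F(e^{t\Delta_\Omega}\phi(c_\tau))|_{\tau=0}$ is identified as an explicit nonzero multiple of the sectional curvature plus terms under our control — making sure no hidden cancellation kills the curvature term for all choices of the free data. Verifying that this coefficient genuinely obstructs concavity (both signs reachable) is where the argument stands or falls.
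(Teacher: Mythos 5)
Your proposal goes a genuinely different route from the paper (a direct short-time expansion for the given $F$ at a point of nonzero curvature), but as it stands it has two gaps that sit exactly where the paper invests its machinery. First, an admissible $F$ is only assumed continuous and strictly increasing, so the quantities $F'(\phi)$, $\partial_\tau^2 F(e^{t\Delta_\Omega}\phi(c_\tau))$, etc.\ need not exist; and even for smooth $F$ the decisive $t$-coefficient is $\Hess_M\bigl(F'(\phi)\,\Delta_M\phi\bigr)(v,v)$ with $\phi=F^{-1}(\psi)$, which mixes the curvature with $F'$, $F''$ and with derivatives of the datum. It is not ``a term proportional to $K(\sigma)$ independent of the freedom left in $\phi$'', and rescaling the datum does not act simply on $F\circ u$ except when $F=\log$ (scale invariance of the preserved class is itself a nontrivial consequence of preservation, Proposition~\ref{theorem:2.2}~(2)). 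Showing that for \emph{every} admissible $F$ one can choose a genuinely $F$-concave datum (so $F\circ\phi$ concave with a degenerate direction, condition \eqref{enumi:C1}--\eqref{enumi:C2}) for which the curvature contribution is positive and dominant is essentially the whole theorem. The paper avoids this: it first transplants Euclidean counterexamples into small geodesic balls by a blow-up/scaling argument (Lemma~\ref{theorem:3.2}, Proposition~\ref{theorem:3.1}) to show that preservation in $\Omega$ forces preservation in $\mathbb{R}^n$, then uses the Euclidean characterization of \cite{IST05} (Proposition~\ref{theorem:2.2}, Lemmas~\ref{theorem:2.3} and \ref{theorem:2.5}) to squeeze $F$-concavity between hot- and log-concavity and deduce that log-concavity itself must be preserved (Proposition~\ref{theorem:3.5}); only then is the short-time argument run, and only for $F=\log$ (Lemma~\ref{theorem:4.1} plus an explicit quartic $\psi$ in normal coordinates), where even verifying concavity of $\psi$ requires the determinant computation \eqref{eq:4.1}--\eqref{eq:4.2}, not routine bookkeeping.

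Second, your treatment of quasi-concavity and $\infty$-concavity would fail. Quasi-concavity is invariant under arbitrary increasing reparametrizations of the value axis, so no pointwise second-order (Hessian-sign) information about the evolved solution contradicts it; violating it requires producing two points and a connecting minimal geodesic along which the min-inequality fails, a global matter. Moreover, quasi-concavity already fails to be preserved by DHF in flat $\mathbb{R}^n$, so its failure cannot be driven by a curvature-sign condition at all, and for your radially symmetric datum the superlevel sets of $e^{t\Delta_\Omega}\phi$ at small $t$ are close to small geodesic balls, hence convex --- no contradiction appears. In the paper this case rests on the nontrivial Euclidean result Proposition~\ref{theorem:2.2}~(3), transplanted into $\Omega$ by the same scaling argument (Lemma~\ref{theorem:3.3}), while $\infty$-concavity is handled through a characteristic-function datum and the behavior of $F$ near $0$ (Lemma~\ref{theorem:3.4}), again with curvature playing no role. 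Without a substitute for these steps, your argument covers at best smooth $F$ with favorable structure, not the full class of $\overline{F}$-concavities in the statement.
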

Thus no concavity properties are preserved  by the Dirichlet heat flow in a curved totally convex domain. 
While at the same time there are several notions of convexity for sets and concavity of functions due to the non-uniqueness of minimal geodesics
so that there is room to answer question~\eqref{eq:Q} with other notions. 
Recall that a subset $A$ of $M$ is \emph{totally convex} if any minimal geodesic in $M$ joining two points in $A$ also lies in $A$.
On one hand, as for a stronger convexity, $A$ is \emph{strongly convex} 
if for any two points in $A$ there exists a unique minimal geodesic joining them in $M$  and the geodesic is contained in $A$ (see e.g.\,\cite{Sakai}*{Definition~IV.5.1}).
On the other hand, as for a weaker one, 
$A$ is \emph{convex} if every two points in $A$ there is a minimal geodesic joining  them which entirely belongs to $A$
(see e.g.\,\cite{BBI}*{Definition~3.6.5}).
Theorem~\ref{theorem:1.4} obviously holds for strongly convex domains 
and also for  convex domains since any open convex set is  totally convex.
We can also discuss weak $\overline{F}$-concavity.
Recall that a function $f$ in a convex  is  \emph{weakly $F$-concave} in $A$
if for any two points in $A$ there exists at least one minimal geodesic $c\colon [0,1]\to A$ joining them such that \eqref{eq:1.1} holds
(see e.g.\,\cite{Vi}*{Definition~16.1}).
The weak $\pm\infty$-concavity  is also defined in a similar way.
Then $\overline{F}$-concavity and  weak $\overline{F}$-concavity are equivalent to each other in a strongly convex domain 
but not  necessarily in a totally convex domain.
In addition, the preservation of $\overline{F}$-concavity by DHF in a totally convex domain 
may not always imply that of  weak $\overline{F}$-concavity and vice versa. 
However, with our method, Theorem~\ref{theorem:1.4} holds even if we replace $\overline{F}$-concavity with  weak $\overline{F}$-concavity 
(see Theorem~\ref{theorem:4.2}). 
Thus our answer to question~\eqref{eq:Q} is still negative even if we treat strongly convex domains, convex domains, and weak $\overline{F}$-concavity.
There is still another notion of concavity for functions. 
As mentioned before, the  Pr\'ekopa--Leindler inequality plays an important role in the proof of the preservation of log-concavity in~$\mathbb{R}^n$
and there is a Riemannian version of the Pr\'ekopa--Leindler inequality inequality (see e.g.\,\cite{Vi}*{Theorem~19.16}).
The Pr\'ekopa--Leindler inequality on a Riemannian manifold  incorporates a distortion coefficient, not linear coefficients, due to the curvature of a Riemannian manifold,
which suggests the possibility of affirmative  to question~\eqref{eq:Q} with the definition of concavity involving distortion coefficients.

We explain the strategy of the proof of Theorem~\ref{theorem:1.4}.
Assume that $\overline{F}$-concavity is preserved by DHF in $\Omega$.
We apply similar transformation of the heat equation on~$\mathbb{R}^n$ to prove that 
$\overline{F}$-concavity is also preserved by DHF in $\mathbb{R}^n$ (see Proposition~\ref{theorem:3.2}). 
In this step we require delicate approximations of DHF.
Next, we apply the arguments in \cite{IST05} to obtain a characterization of $\overline{F}$-concavity, and we deduce that 
log-concavity is  also preserved by DHF in $\Omega$ (see Proposition~\ref{theorem:3.5}).
Finally, we employ the arguments in \cite{CW3} (see Lemma~\ref{theorem:4.1}) to find a log-concave initial data 
for which the corresponding solution to problem~\eqref{eq:H} is not log-concave for some $t>0$ 
if the sectional curvature does not vanish on $\Omega$.
Then we see that no $\overline{F}$-concavity is preserved by DHF in $\Omega$
unless the sectional curvature vanishes everywhere on $\Omega$, and complete the proof of Theorem~\ref{theorem:1.4}.

The rest of this paper is organized as follows.
Section~\ref{section:2} concerns with the notation and  some properties of $\overline{F}$-concavity.
Section~\ref{section:3} is devoted to a characterization of $\overline{F}$-concavity preserved by DHF in totally convex domains of a Riemannian manifold.
In Section~\ref{section:4} we prove Theorem~\ref{theorem:1.4}. 
Furthermore, we obtain non-preservation of weak $\overline{F}$-concavity.
\section{Preliminary}\label{section:2}
In this section we fix our notation and recall some properties of $\overline{F}$-concavity.
Throughout this paper,
let  $n\geq 2$ and  $M=(M,g)$ be a complete, connected, smooth $n$-dimensional Riemannian manifold without boundary.
We denote by $x^1,\ldots, x^n$ the natural coordinate functions of $\mathbb{R}^n$.
For $o\in M $ and $r>0$,
let $B^{M}_o(r)$ denotes the open ball in $M$ centered at~$o$ of radius $r$.
%
%
Let ${\bf 1}_E$ denote the characteristic function of a set $E$.

%

Let us first recall some expressions in terms of a coordinate system~$(U,\xi)$ in $M$.
As usual, setting  
\begin{align*}
R_{ijk\ell}
&\coloneqq 
g\left(\frac{\partial}{\partial \xi^i}, 
D_{\frac{\partial}{\partial \xi^\ell}}\left({D_\frac{\partial}{\partial \xi^{k}}}\frac{\partial}{\partial \xi^j}\right)
-D_{\frac{\partial}{\partial \xi^k}}\left({D_\frac{\partial}{\partial \xi^{\ell}}}\frac{\partial}{\partial \xi^j}\right)
\right)
\quad\text{for }i,j,k,\ell=1,\ldots,n,
\end{align*}
where $D$ is the Levi-Civita connection of $M$,
we have $R_{ijk\ell}=-R_{jik\ell}=-R_{ij\ell k}=R_{k\ell ij}$.
The coordinate expression of the gradient vector field, Hessian, and the Laplacian of a smooth function~$f$ on $U$ are given by
\begin{align*}
 & \nabla_Mf=\sum_{i,j=1}^n g^{ij} \frac{\partial f}{\partial \xi^j} \frac{\partial }{\partial \xi^i},\\
 & \Hess_M f\left( \frac{\partial }{\partial \xi^i},\frac{\partial }{\partial \xi^j} \right) 
=\frac{\partial^2 f}{\partial \xi^i\partial \xi^j}-\sum_{k=1}^n\Gamma_{ij}^k \frac{\partial f}{\partial \xi^k} \quad \text{for }i,j=1,\ldots,n,\\
 & \Delta_Mf 
= \sum_{i,j=1}^n g^{ij}\left( \frac{\partial^2 f}{\partial \xi^i\partial \xi^j}-\sum_{k=1}^n\Gamma_{ij}^k \frac{\partial f}{\partial \xi^k}\right),
\end{align*}
respectively.
If $(U,\xi)$ is a normal coordinate system at $o$ determined by the orthonormal basis $e_1,\ldots, e_n$ for $T_oM$, then, for $i,j,k=1,\ldots,n$, we have 
\begin{align}
\begin{split}
\label{eq:2.1}
e_i&=\frac{\partial}{\partial \xi^i}\bigg|_o,\\
g_{ij}&=g\left( \frac{\partial }{\partial \xi^i},\frac{\partial }{\partial \xi^j} \right) =\delta_{ij}+\frac{1}{3}\sum_{k,\ell=1}^n R_{ikj\ell}(o)\xi^k\xi^{\ell}+O(|\xi|^3),\\
g^{ij}&=\delta_{ij}-\frac{1}{3}\sum_{k,\ell=1}^n R_{ikj\ell}(o)\xi^k\xi^{\ell}+O(|\xi|^3),\\
\Gamma_{ij}^k&=\frac13\sum_{\ell=1}^n \left(R_{i \ell k j }(o) +R_{j \ell k i }(o) \right)  \xi^{\ell} +O(|\xi|^2),
\end{split}
\end{align}
as $\xi \to0$,
which in turn yields  $g_{ij}(o)=g^{ij}(o)=\delta_{ij}$ and $\Gamma_{ij}^k(o)=0$
(see e.g.\,\cite{Sakai}*{Proposition~II~3.1}).
For distinct $i,j=1,\ldots,n$, $-R_{ijij}(o)$ is the sectional curvature of the tangent plane spanned by  $e_i$ and $e_j$.

Next, we introduce hot-concavity (see~\cite{IST05}).
\begin{definition}
\label{theorem:2.1}
Let $H\colon (0,1)\to \mathbb{R}$ be the inverse function of
\[
\left(e^{{\Delta}_\mathbb{{R}}}{\bf 1}_{[0,\infty)}\right)(s)=(4\pi)^{-\frac{1}{2}}\int_0^\infty e^{-\frac{(s-s')^2}{4}}\,ds'
\quad\mbox{for $s\in\mathbb{R}$}.
\]
Given $a\in(0,\infty]$, we define an admissible function $H_a$ on $[0,a)$ by 
\begin{equation*}
H_a(r)\coloneqq 
\begin{cases}
H(a^{-1}r) & \mbox{for $r\in(0,a)$ if $a>0$},\vspace{3pt}\\
\log r & \mbox{for $r\in(0,a)$ if $a=\infty$},\vspace{3pt}\\
-\infty & \mbox{for $r=0$}.
\end{cases}
\end{equation*}
\emph{Hot-concavity} is a generic term for $H_a$-concavity with $a\in (0,\infty]$.
\end{definition}
We  recall a result on a characterization of $\overline{F}$-concavity preserved by DHF in convex domains of $\mathbb{R}^n$. 
\begin{proposition}[\cite{IST05}*{Theorem~1.5, Theorem~5.2}]
\label{theorem:2.2}
Let $\Omega$ be a convex domain of $\mathbb{R}^n$.
Let $F$ be admissible on $[0,a)$ with $a\in(0,\infty]$ 
and assume that $F$-concavity is preserved by DHF in~$\Omega$.
\begin{enumerate}
\renewcommand{\labelenumii}{(\theenumii)}
\setlength{\leftskip}{-10pt}
\item \label{enumi:1}
$F$-concavity is weaker than $H_a$-concavity 
and 
stronger than log-concavity  in $\mathcal{A}_\Omega(a)$.
\item\label{enumi:2} 
If a function $f$ is $F$-concave in $\Omega$, then so is $\varepsilon f$ for $\varepsilon\in(0,1)$.
\item \label{enumi:3}
If $F$-concavity is strictly weaker than log-concavity in ${\mathcal A}_\Omega(a)$, 
  then there exists a bounded continuous function $\phi$ on $\overline{\Omega}$ 
  with the following  properties:
  \begin{itemize}
  \setlength{\leftskip}{-10pt}
  \item[$\bullet$]
   $\phi$ is $F$-concave in $\Omega$ and $\phi=0$ on $\partial\Omega$ if $\partial\Omega\not=\emptyset$;
  \item[$\bullet$]
   $e^{t\Delta_\Omega}\phi$ is not quasi-concave in $\Omega$ for some $t>0$.
\end{itemize}
\end{enumerate}
\end{proposition}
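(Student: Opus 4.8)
The plan is to read off each conclusion by testing the preservation hypothesis against explicit initial data and tracking the transformed quantity $w\coloneqq F\circ u$. If $u$ solves \eqref{eq:H} with an $F$-concave datum, then $F$-concavity of $u(\cdot,t)$ is exactly spatial concavity of $w(\cdot,t)$, and writing $u=F^{-1}(w)$ turns the heat equation into
\[
\partial_t w=\Delta w+\beta(w)\,|\nabla w|^2,\qquad \beta\coloneqq\big(\log (F^{-1})'\big)',
\]
so that preservation of $F$-concavity means that spatial concavity of $w$ propagates along this quasilinear flow. The two bounds in \ref{enumi:1} and the construction in \ref{enumi:3} come from well-chosen data, while the sharp pointwise constraint on $F$ behind \ref{enumi:2} comes from a Hessian maximum principle for the displayed equation.

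I would prove \ref{enumi:3} first and then deduce the log-concavity bound of \ref{enumi:1} from it. Since an $F$-concave function is automatically quasi-concave, it suffices to produce $F$-concave data whose flow is not quasi-concave. The hypothesis that $F$-concavity is strictly weaker than log-concavity in particular implies that $F\circ\exp$ fails to be convex, which yields heights $r_0,r_1\in(0,a)$ and $\tau\in(0,1)$ for which the $F$-interpolated value $F^{-1}\big((1-\tau)F(r_0)+\tau F(r_1)\big)$ lies strictly below the geometric mean $r_0^{1-\tau}r_1^{\tau}$. Thus $F$-concavity admits unimodal profiles that sag below the log-concave interpolant, that is, profiles with relatively heavier shoulders than any log-concave function allows. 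I would use this slack to build a bounded continuous $\phi$, $F$-concave in $\Omega$ and vanishing on $\partial\Omega$, whose shoulders are curved enough that a short-time analysis of $e^{t\Delta_\Omega}\phi$, in the spirit of \cite{CW3} and \cite{IST05}, lifts the shoulders above the centre for small $t>0$, producing two separated maxima and hence a non-convex superlevel set. This construction is the step I expect to be the main obstacle: one must keep $\phi$ genuinely $F$-concave and compatible with the boundary condition while forcing a quantitatively robust central dip under the flow, which needs a careful matched short-time expansion rather than a soft limiting argument.

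The ``stronger than log-concavity'' half of \ref{enumi:1} then follows by contraposition: the construction uses only the gap $F^{-1}((1-\tau)F(r_0)+\tau F(r_1))<r_0^{1-\tau}r_1^{\tau}$, which is available as soon as $F\circ\exp$ is not convex, so if $F$-concavity were not stronger than log-concavity one would obtain $F$-concave data whose flow loses quasi-concavity, hence loses $F$-concavity, contradicting preservation. For the ``weaker than $H_a$-concavity'' half I would test against scaled half-space indicators $a_k\mathbf{1}_E$ with $a_k\uparrow a$, which are $\infty$-concave, hence $F$-concave, with values in $[0,a)$. Localizing at an interior point and rescaling space--time so that the boundary recedes and the flow converges to the whole-line Gaussian evolution reduces matters to the one-dimensional computation defining $H_a$ in Definition~\ref{theorem:2.1}, for which the error-function profile $e^{t\Delta_{\mathbb{R}}}\mathbf{1}_{[0,\infty)}$ is $H_a$-affine at the normalizing time. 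Since each $s\mapsto F\big(a_k\,(e^{t\Delta_{\mathbb{R}}}\mathbf{1}_{[0,\infty)})(s)\big)$ is concave by preservation and concavity passes to the limit $k\to\infty$, one gets that $F\circ H_a^{-1}$ is concave and nondecreasing, i.e.\ $\mathcal{C}_\Omega[H_a]\cap\mathcal A_\Omega(a)\subset\mathcal{C}_\Omega[F]$; the delicate point here is the localization and rescaling that legitimize importing the whole-space step-function computation into the fixed domain $\Omega$.

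Finally, \ref{enumi:2} is a scale-closedness refinement that I would extract from the displayed equation itself. Evaluating the evolution of the second derivative $w_{ee}$ along a direction $e$ at a first point and time where spatial concavity of $w$ degenerates, and requiring $\partial_t w_{ee}\le 0$ there for concavity to survive, yields a pointwise inequality on $\beta$ and $\beta'$ that in particular recovers the two bounds of \ref{enumi:1} but, being a genuine differential condition, carries more information. Since $\varepsilon f$ is $F$-concave precisely when $s\mapsto F(\varepsilon F^{-1}(s))$ is concave, it remains to verify that this differential condition is stable under the substitution $r\mapsto\varepsilon r$ for $\varepsilon\in(0,1)$; I would check this directly on the inequality for $\beta$, which is the only place where its precise form, and not merely the two endpoint bounds of \ref{enumi:1}, is needed. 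Together these four steps establish all three items.
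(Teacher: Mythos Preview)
The paper does not prove Proposition~\ref{theorem:2.2}; it is imported wholesale from \cite{IST05}*{Theorems~1.5 and 5.2} and used as a black box, so there is no proof in this paper to compare your attempt against.

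Assessing your sketch on its own terms: the overall architecture is sound. The transformed equation for $w=F\circ u$ is indeed the right object, the half-space indicator test together with a blow-up to $\mathbb{R}^n$ is the standard route to the $H_a$ bound, and deriving the log-concavity half of \ref{enumi:1} by contraposition from \ref{enumi:3} is exactly the logical relationship between those two items (and is how the paper itself uses \ref{enumi:3} in Lemma~\ref{theorem:3.3}). Your reading that \ref{enumi:3} must really stand without the preservation hypothesis is also correct.

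That said, two of your four steps are not proofs but plans. For \ref{enumi:3} you openly flag the construction as the main obstacle and offer only the qualitative intuition that the gap $F^{-1}((1-\tau)F(r_0)+\tau F(r_1))<r_0^{1-\tau}r_1^{\tau}$ allows an $F$-concave profile to sag in the middle; turning this into an explicit $\phi$ whose flow genuinely loses quasi-concavity, while keeping $\phi$ $F$-concave and compatible with the boundary, is the substantive content of \cite{IST05}*{Theorem~5.2} and is not delivered by a generic short-time expansion. For \ref{enumi:2} your proposal to extract a pointwise differential inequality on $\beta$ at a degenerate Hessian point and then check its stability under $r\mapsto\varepsilon r$ is plausible in spirit but you neither write down the inequality nor verify the stability; as stated this is a hope rather than an argument. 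In short, the outline is credible and correctly organized, but items \ref{enumi:2} and \ref{enumi:3} remain to be actually proved.
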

Proposition~\ref{theorem:2.2}~(1) together with the following lemma implies that 
if an admissible function $F$ satisfies $\lim_{r\to 0+}F(r)>-\infty$, 
then  $F$-concavity is not preserved by Dirichlet heat flow in $\mathbb{R}^n$. 
Note that $\lim_{r\to +0}H_a(r)=-\infty$.
\begin{lemma}[\cite{IST05}*{Lemma~2.7}]
\label{theorem:2.4}
Let $F_1$ and $F_2$ be admissible on $I=[0,a)$ with $a\in(0,\infty]$. 
If $F_1$-concavity is weaker than $F_2$-concavity in ${\mathcal A}_{{\mathbb R}^n}(a)$ 
and $\lim_{r\to +0}F_2(r)=-\infty$, 
then $\lim_{r\to +0}F_1(r)=-\infty$.
\end{lemma}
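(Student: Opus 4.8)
The plan is to argue by contradiction: assuming $L_1:=\lim_{r\to +0}F_1(r)>-\infty$, I will produce an $F_2$-concave function on $\mathbb{R}^n$ belonging to $\mathcal{A}_{\mathbb{R}^n}(a)$ which is \emph{not} $F_1$-concave, contradicting the assumed weakness. Everything reduces to one dimension: since the minimal geodesics of $\mathbb{R}^n$ are affine segments, a function of the form $f(x)=h(x^1)$ is $F_j$-concave on $\mathbb{R}^n$ if and only if $F_j\circ h$ is concave on $\mathbb{R}$; in particular it suffices to test the concavity inequality \eqref{eq:1.1} on segments lying in the $x^1$-axis.

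First I would record the structure of $F_2$. Because $F_2$ is admissible and, by hypothesis, $\lim_{r\to +0}F_2(r)=-\infty$, continuity and strict monotonicity on $(0,a)$ show that $F_2$ maps $(0,a)$ bijectively and increasingly onto $(-\infty,M_2)$, where $M_2:=\lim_{r\to a-}F_2(r)\in(-\infty,\infty]$; hence $F_2^{-1}$ is well defined and strictly increasing on $(-\infty,M_2)$ with $\lim_{s\to -\infty}F_2^{-1}(s)=0$. Fix any $s_0<M_2$ and set $h(s):=F_2^{-1}(s_0-s^2)$ for $s\in\mathbb{R}$, so that $h(\mathbb{R})\subset(0,F_2^{-1}(s_0)]\subset(0,a)$. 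Since $F_2\circ h(s)=s_0-s^2$ is concave on $\mathbb{R}$, the function $f(x):=h(x^1)$ lies in $\mathcal{C}_{\mathbb{R}^n}[F_2]\cap\mathcal{A}_{\mathbb{R}^n}(a)$, hence in $\mathcal{C}_{\mathbb{R}^n}[F_1]$ by the hypothesis that $F_1$-concavity is weaker than $F_2$-concavity in $\mathcal{A}_{\mathbb{R}^n}(a)$. Consequently $\Psi(s):=F_1(h(s))=F_1\bigl(F_2^{-1}(s_0-s^2)\bigr)$ must be concave on $\mathbb{R}$.

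The contradiction comes from analyzing $\Psi$. Monotonicity of $F_1$ and of $F_2^{-1}$ gives $L_1<\Psi(s)\le\Psi(0)$ for every $s\in\mathbb{R}$, with the left inequality strict because $F_2^{-1}(s_0-s^2)>0$; moreover $\Psi(s)\to L_1$ as $|s|\to\infty$, while $\Psi(0)=F_1(F_2^{-1}(s_0))>L_1$ strictly. Thus $\Psi$ is a non-constant function on $\mathbb{R}$ that is bounded below. But a concave function on all of $\mathbb{R}$ that is bounded below must be constant (any negative, resp.\ positive, slope forces the function to $-\infty$ on the right, resp.\ left), which is the desired contradiction; therefore $L_1=-\infty$. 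I do not expect a serious obstacle here: the only delicate point is keeping the test function inside $[0,a)$ when $a$ (equivalently $M_2$) is finite, which is precisely why the parabola $s_0-s^2$ is capped by a level $s_0<M_2$ rather than using $-s^2$ directly, and one should also confirm the compatibility of the one-variable reduction with the ``all minimal geodesics'' formulation of Definition~\ref{theorem:1.1}, which is immediate since segments in the $x^1$-axis simultaneously certify $f\in\mathcal{C}_{\mathbb{R}^n}[F_2]$ and detect the failure of $F_1$-concavity through $\Psi$.
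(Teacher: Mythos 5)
Your argument is correct: the test function $f(x)=F_2^{-1}(s_0-(x^1)^2)$ lies in $\mathcal{C}_{\mathbb{R}^n}[F_2]\cap\mathcal{A}_{\mathbb{R}^n}(a)$, so the weakness hypothesis forces $\Psi(s)=F_1\bigl(F_2^{-1}(s_0-s^2)\bigr)$ to be concave on $\mathbb{R}$, and a concave function on all of $\mathbb{R}$ that is bounded below (by the assumed finite limit $L_1$) yet non-constant (note $\Psi(s)<\Psi(0)$ for $s\neq 0$ by strict monotonicity) cannot exist, which is exactly the needed contradiction. Note that the paper itself gives no proof of this lemma --- it is quoted from \cite{IST05}*{Lemma~2.7} --- so your self-contained one-dimensional construction, which is the natural argument for this statement, stands on its own; the only points worth stating explicitly are that $F_1(r)>L_1$ for every $r>0$ (strict monotonicity) and that restricting the concavity of $F_1\circ f$ to segments on the $x^1$-axis yields the concavity of $\Psi$, both of which you have handled.
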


The following lemmas  are shown similarly to the argument for \cite{IST05}*{Lemma~2.6} and \cite{IST05}*{Lemma~2.10}, respectively. 
We omit the proofs.
\begin{lemma}[]
\label{theorem:2.3}
Let $F_1$ and $F_2$ be admissible on $[0,a)$ with $a\in (0,\infty]$.
For a totally convex domain $\Omega$ of $M$, 
\begin{equation*}
 {\mathcal C}_\Omega[F_2]\subset{\mathcal C}_\Omega[F_1]
\quad\mbox{if and only if}\quad
{\mathcal C}_\mathbb{{R}}[F_2]\subset{\mathcal C}_\mathbb{{R}}[F_1].
\end{equation*}
Similarly, any weakly $F_2$-concave function in $\Omega$ is weakly $F_1$-concave in $\Omega$ 
if and only if ${\mathcal C}_\mathbb{{R}}[F_2]\subset{\mathcal C}_\mathbb{{R}}[F_1]$.
\end{lemma}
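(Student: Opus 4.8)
The plan is to factor both equivalences through the single one-variable condition that the map $G\coloneqq F_1\circ F_2^{-1}$, extended by $G(-\infty)\coloneqq-\infty$, is concave on the interval $F_2([0,a))$. First I record the one-dimensional fact
\[
{\mathcal C}_{\mathbb R}[F_2]\subset{\mathcal C}_{\mathbb R}[F_1]
\quad\Longleftrightarrow\quad
G\ \text{is concave on}\ F_2([0,a)),
\]
which is exactly the argument underlying \cite{IST05}*{Lemma~2.6}: a function $h\in{\mathcal A}_{\mathbb R}(a)$ lies in ${\mathcal C}_{\mathbb R}[F_2]$ iff $F_2\circ h$ is concave, and then $F_1\circ h=G\circ(F_2\circ h)$; since $G$ is increasing, $G\circ v$ is concave for every concave $v\colon{\mathbb R}\to F_2([0,a))$ precisely when $G$ is concave, while a failure of concavity of $G$ is witnessed by an affine $v$. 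Thus the lemma reduces to showing that ${\mathcal C}_\Omega[F_2]\subset{\mathcal C}_\Omega[F_1]$ is equivalent to the concavity of $G$, and likewise that ``every weakly $F_2$-concave function in $\Omega$ is weakly $F_1$-concave'' is equivalent to the concavity of $G$.

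\textbf{The ``if'' directions.} These I would obtain by composition. Suppose $G$ is concave. If $f\in{\mathcal C}_\Omega[F_2]$, then for every minimal geodesic $c\colon[0,1]\to\Omega$ the function $\tau\mapsto F_2(f(c_\tau))$ satisfies \eqref{eq:1.1}; applying the increasing concave map $G$ and using $G\circ F_2=F_1$ on $[0,a)$ yields \eqref{eq:1.1} for $F_1$ along $c$, so $f\in{\mathcal C}_\Omega[F_1]$ (if $f(c_0)=0$ or $f(c_1)=0$ the right-hand side of \eqref{eq:1.1} is $-\infty$ and there is nothing to prove). The identical computation, carried out along the one minimal geodesic supplied by the definition of weak $F_2$-concavity for each pair of points, shows that weak $F_2$-concavity implies weak $F_1$-concavity.

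\textbf{The ``only if'' directions.} Here I argue by contraposition, and this is the only place where the manifold setting differs essentially from the Euclidean one: on a general manifold one cannot extend a one-variable concave function to a globally concave function on $\Omega$ (for instance on $\Omega=M={\mathbb S}^n$ the only geodesically concave functions are constants), so the construction $f(x_1,\dots,x_n)=h(x_1)$ used in \cite{IST05}*{Lemma~2.6} is unavailable. Instead, assume $G$ is not concave, so there are finite $s_0<s_1$ in $F_2([0,a))$ and $\tau_0\in(0,1)$ with $G(s_{\tau_0})<(1-\tau_0)G(s_0)+\tau_0 G(s_1)$, where $s_{\tau_0}\coloneqq(1-\tau_0)s_0+\tau_0 s_1$. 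Pick $p\in\Omega$ and a strongly convex ball $B^M_p(\rho)\subset\Omega$ (which exists since $\Omega$ is open and non-empty), and let $c\colon[0,1]\to B^M_p(\rho)$ be a non-constant minimal geodesic; then $c$ is injective and, for any two of its points, the unique minimal geodesic of $M$ between them is the corresponding subarc of $\Gamma\coloneqq c([0,1])$. Put $\ell(t)\coloneqq(1-t)s_0+ts_1$ and define $f\colon\Omega\to[0,a)$ by $f(x)\coloneqq F_2^{-1}(\ell(c^{-1}(x)))$ for $x\in\Gamma$ and $f(x)\coloneqq 0$ for $x\notin\Gamma$.

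\textbf{Conclusion and main obstacle.} The key point is that $F_2\circ f$ is concave in $\Omega$: it equals $-\infty$ off $\Gamma$, and every minimal geodesic of $\Omega$ either misses $\Gamma$, meets it in a single point, or overlaps a subarc of $\Gamma$ along which $F_2\circ f$ is $\ell$ composed with an affine reparametrization; in all cases the value $-\infty$ makes \eqref{eq:1.1} automatic except along the overlapping subarcs, where the affinity (hence concavity) of $\ell$ gives it. Hence $f\in{\mathcal C}_\Omega[F_2]$, and the restriction of $f$ to $\Gamma$ shows that $f$ is also weakly $F_2$-concave. On the other hand, along $c$ we compute $F_1(f(c_{\tau_0}))=G(s_{\tau_0})<(1-\tau_0)G(s_0)+\tau_0 G(s_1)=(1-\tau_0)F_1(f(c_0))+\tau_0 F_1(f(c_1))$, so \eqref{eq:1.1} for $F_1$ fails along $c$; since $c$ is the \emph{unique} minimal geodesic between $c_0$ and $c_1$, this shows $f\notin{\mathcal C}_\Omega[F_1]$ and that $f$ is not even weakly $F_1$-concave. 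Therefore each of the two containments in the lemma forces $G$ to be concave, and combining this with the ``if'' directions and the one-dimensional equivalence completes the proof. The main obstacle is precisely this construction of a ``thin'' test function supported on a totally convex geodesic segment; once the convention $\log 0=-\infty$ is used to neutralize all geodesics transverse to the segment, the remaining verifications are routine.
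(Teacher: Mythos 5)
Your argument is correct, and it is worth noting that the paper gives no written proof of this lemma at all: it only says the statement "is shown similarly to the argument for \cite{IST05}*{Lemma~2.6}". Your write-up supplies precisely the two ingredients such an argument needs: the reduction of every inclusion in sight to concavity of $G=F_1\circ F_2^{-1}$ on $F_2((0,a))$ (the content of the Euclidean lemma), and a manifold substitute for the Euclidean device of taking a function of one coordinate, namely a test function carried by a single non-constant minimal geodesic segment inside a strongly convex ball and set equal to $0$ (hence $F_2$-value $-\infty$) elsewhere. Strong convexity, which makes the subarc the unique minimal geodesic of $M$ between any two of its points, is exactly what makes both the $F_2$-concavity of your $f$ and the failure of (weak) $F_1$-concavity along $c$ go through, so the construction is sound and this is plausibly the very adaptation the authors had in mind. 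Two small points to tighten: (i) in the one-dimensional equivalence, a failure of concavity of $G$ is not literally witnessed by an affine $v$ defined on all of $\mathbb{R}$, since a non-constant affine function eventually leaves $F_2((0,a))$ when $F_2$ is bounded above; you need the same truncation you use on the manifold (affine on a segment, $-\infty$ outside), which is harmless but should be stated; (ii) "the restriction of $f$ to $\Gamma$ shows that $f$ is weakly $F_2$-concave" is not the right justification --- weak $F_2$-concavity asks, for \emph{every} pair of points of $\Omega$, for some minimal geodesic in $\Omega$ joining them along which \eqref{eq:1.1} holds, and this follows because your $f$ satisfies \eqref{eq:1.1} along \emph{all} minimal geodesics and such a geodesic exists by completeness of $M$ (Hopf--Rinow) combined with the total convexity of $\Omega$.
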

\begin{lemma}
\label{theorem:2.5}
Let $\Omega$ be  a totally convex domain $\Omega$ of $M$.
For any bounded log-concave \textup{(}resp.\,weakly log-concave\textup{)} function $f$ in $\Omega$,
there exists a sequence $\{f_a\}_{a>0}$ such that 
$f_a$ is $H_a$-concave \textup{(}resp.\,weakly $H_a$-concave\textup{)} in $\Omega$ and 
$\lim_{a\to\infty}f_a=f$ uniformly on $\Omega$.
\end{lemma}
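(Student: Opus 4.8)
\emph{Proof proposal.} Write $m:=\|f\|_{L^\infty(\Omega)}$ (if $m=0$ there is nothing to prove, so assume $m>0$) and abbreviate $\Theta:=e^{\Delta_{\mathbb{R}}}\mathbf{1}_{[0,\infty)}$, so that $H=\Theta^{-1}$ and $H_a(r)=H(r/a)$ for $r\in(0,a)$. The plan is to obtain $f_a$ as a single pointwise transformation of $f$. Put $G_a:=H_a\circ\exp$ and, for $a>2m$,
\begin{equation*}
f_a:=a\,\Theta\big(\lambda_a\log f+\mu_a\big),\qquad \lambda_a:=G_a'(\log m)>0,\qquad \mu_a:=G_a(\log m)-\lambda_a\log m,
\end{equation*}
so that $r\mapsto\lambda_a r+\mu_a$ is the tangent line to $G_a$ at $r=\log m$; for $0<a\le2m$ set $f_a:=0$ (only the limit $a\to\infty$ is relevant). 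This is, in essence, the construction of \cite{IST05}*{Lemma~2.10} for convex domains of $\mathbb{R}^n$, and since it refers only to concavity of real-valued functions along minimal geodesics it transfers verbatim to a totally convex domain of $M$.

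First I would record the two facts about $\Theta$ that are needed. A short computation (passing to the variable $v=H(s)$ and using $\Theta''(v)=-\tfrac{v}{2}\Theta'(v)$) shows $\tfrac{d}{ds}(sH'(s))>0$ on $(0,\tfrac12)$, whence $G_a$ is smooth, strictly increasing and \emph{strictly convex} on $(-\infty,\log m]$ as soon as $a>2m$; and the Gaussian-tail asymptotics give, as $t\to0^+$ and $v\to-\infty$,
\begin{equation*}
H(t)=-2\sqrt{\log(1/t)}\,(1+o(1)),\qquad \frac{\Theta'(v)}{\Theta(v)}=\frac{-v}{2}(1+o(1)).
\end{equation*}
Convexity of $G_a$ is exactly what makes $f_a$ an $H_a$-concave element of $\mathcal{A}_\Omega(a)$: one has $0\le f_a<a$, and since $H=\Theta^{-1}$,
\begin{equation*}
H_a\circ f_a=H\big(\Theta(\lambda_a\log f+\mu_a)\big)=\lambda_a\log f+\mu_a,
\end{equation*}
which is an increasing affine image of the concave function $\log f$, hence concave along every minimal geodesic in $\Omega$. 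In the weakly log-concave case one uses, for each pair of points, the geodesic along which $\log f$ satisfies \eqref{eq:1.1}; then $\lambda_a\log f+\mu_a=H_a\circ f_a$ satisfies \eqref{eq:1.1} along the same geodesic, so $f_a$ is weakly $H_a$-concave.

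The core of the proof is the uniform estimate $\sup_\Omega(f-f_a)\to0$. Fix $x\in\Omega$ and put $u:=\log f(x)\le\log m$, $A:=\log a$, $M:=\log m$. Convexity of $G_a$ gives $\lambda_a u+\mu_a\le G_a(u)=H_a(e^u)$, so $f_a(x)=a\Theta(\lambda_a u+\mu_a)\le a\Theta(H_a(e^u))=e^u=f(x)$; and since $\Theta'$ is increasing on $[\lambda_a u+\mu_a,H_a(e^u)]\subset(-\infty,0)$,
\begin{equation*}
0\le f(x)-f_a(x)=a\!\int_{\lambda_a u+\mu_a}^{H_a(e^u)}\!\!\Theta'(v)\,dv\le a\,\Theta'\!\big(H_a(e^u)\big)\,E_a(u),\qquad E_a(u):=H_a(e^u)-\lambda_a u-\mu_a\ge0.
\end{equation*}
Using the asymptotics above one gets $a\,\Theta'(H_a(e^u))\sim\tfrac12|H_a(e^u)|\cdot a\Theta(H_a(e^u))\sim e^u\sqrt{A-u}$, while $E_a(u)=\int_u^M(\lambda_a-G_a'(t))\,dt$ with $G_a'(t)\sim(A-t)^{-1/2}$; expanding $\sqrt{A-t}$ yields $E_a(u)\lesssim (M-u)^2/A$ for $M-A\le u\le M$ and $E_a(u)\lesssim (M-u)/\sqrt A$ for $u\le M-A$. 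Hence $f(x)-f_a(x)\lesssim e^u\sqrt{A-u}\,E_a(u)$, and since the exponential factor $e^u$ dominates the polynomial growth in $M-u$ in each range, a short case-check gives $\sup_{x\in\Omega}(f(x)-f_a(x))\le C_m/\log a\to0$; the same estimate gives $f_a\to f$ uniformly in the weakly log-concave case as well.

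The one genuinely delicate point is this uniform estimate: it must hold over the whole, possibly unbounded, range $u\in\log f(\Omega)\subset[-\infty,\log m]$ — in particular as $u\to-\infty$, where $f$ vanishes and $E_a(u)$ grows like $|u|/\sqrt{\log a}$ — so it rests on the precise behaviour of $H=\Theta^{-1}$ near $0$. Everything else — the strict convexity of $G_a$ on the relevant half-line, the bookkeeping in the three regimes $u\approx M$, $M-A\le u\le M$, $u\ll M-A$, and the choice of $f_a$ for small $a$ — is elementary and, crucially, does not involve the Riemannian structure, which is why the argument is a transcription of \cite{IST05}*{Lemma~2.10}.
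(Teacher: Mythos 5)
Your construction is correct: I checked the key points — $G_a=H_a\circ\exp$ is indeed convex on $(-\infty,\log m]$ for $a>2m$ (via $\Theta''=-\tfrac{v}{2}\Theta'$ and the Gaussian tail bound), the tangent-line substitution makes $H_a\circ f_a$ an increasing affine image of $\log f$ (so the argument uses only concavity along minimal geodesics and transfers to $\Omega$, also in the weak case), and the error estimate $\sup_\Omega(f-f_a)\lesssim m/\log a$ goes through uniformly down to $f=0$. This is essentially the approach the paper intends, since it omits the proof and defers to the Euclidean argument of \cite{IST05}*{Lemma~2.10}, of which your proposal is an explicit transcription.
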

%
\section{Characterization of $\overline{F}$-concavity preserved by DHF}\label{section:3}
In this section we give a characterization of $\overline{F}$-concavity preserved by DHF in convex domains.
Throughout the rest of this paper, 
let $\Omega$ be a totally convex domain of $M$ 
and $F$ an admissible on $[0,a)$ with $a\in (0,\infty]$ unless otherwise indicated.
Recall that, given nonnegative $\phi\in L^\infty(\mathbb{R}^n)$,
\[
(e^{t\Delta_{\mathbb{R}^n}}\phi)(x)=(4\pi t)^{\frac{n}{2}}\int_{\mathbb{R}^n}e^{-\frac{|x-y|^2}{4t}}\phi(y)\,dy 
\quad\text{for } (x,t)\in\mathbb{R}^n\times(0,\infty).
\]
We first prove that
the preservation of $\overline{F}$-concavity by DHF in $\Omega$ implies that in $\mathbb{R}^n$. 
\begin{lemma}
\label{theorem:3.2}
If $F$-concavity is not preserved by DHF in $\mathbb{R}^n$ and $\lim_{r\to +0}F(r)=-\infty$, 
then neither $F$-concavity nor weak $F$-concavity is  preserved by DHF in $\Omega$.
\end{lemma}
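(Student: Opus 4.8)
The plan is to establish the contrapositive. So suppose that $F$-concavity, or weak $F$-concavity, is preserved by DHF in $\Omega$ (and recall $\lim_{r\to+0}F(r)=-\infty$); I will deduce that $F$-concavity is preserved by DHF in $\mathbb{R}^n$, which contradicts the hypothesis of the lemma and hence proves it. The underlying idea is that a tiny geodesic ball in $\Omega$, blown up by the exponential map, is almost a huge Euclidean ball, and huge Euclidean balls exhaust $\mathbb{R}^n$; the ``similar transformation of the heat equation'' is this blow-up together with the parabolic rescaling $t\mapsto\lambda^{2}t$.

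Fix $o\in\Omega$ and $\rho_0>0$ so small that $\overline{B^M_o(\rho_0)}\subset\Omega$ and $B^M_o(\rho)$ is strongly convex, hence totally convex, for all $\rho\in(0,\rho_0]$. For $\lambda\in(0,\rho_0)$ let $\Psi_\lambda(x)\coloneqq\exp_o(\lambda x)$ be the diffeomorphism from the Euclidean ball $B^{\mathbb{R}^n}_0(\rho_0/\lambda)$ onto $B^M_o(\rho_0)$ determined by a fixed orthonormal frame of $T_oM$, and set $g_\lambda\coloneqq\lambda^{-2}\Psi_\lambda^{\ast}g$. By the normal-coordinate expansion \eqref{eq:2.1} one has $(g_\lambda)_{ij}(x)=g_{ij}(\lambda x)=\delta_{ij}+O(\lambda^2)$ with Christoffel symbols $O(\lambda)$, uniformly on each fixed ball; thus $g_\lambda\to\delta$ in $C^\infty$ on compact subsets of $\mathbb{R}^n$, and the $g_\lambda$-minimal geodesics converge locally uniformly to Euclidean segments. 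Now take a ``nice'' datum $\phi_0$: bounded and continuous on $\mathbb{R}^n$ with $\phi_0(\mathbb{R}^n)\subset[0,a)$, such that $V\coloneqq\{\phi_0>0\}$ is a bounded open convex set and $F\circ\phi_0$ is smooth with $\Hess(F\circ\phi_0)\le-\delta\,\mathrm{Id}$ on $V$ for some $\delta>0$; such $\phi_0$ are (strictly) $F$-concave in $\mathbb{R}^n$. Define $\psi_\lambda$ on $\Omega$ by $\psi_\lambda\coloneqq\phi_0\circ\Psi_\lambda^{-1}$ on $\Psi_\lambda(V)$ and $\psi_\lambda\coloneqq0$ elsewhere. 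For $\lambda$ small the $g_\lambda$-geodesic between two points of $V$ remains in a fixed neighbourhood of $\overline V$ on which, by strict concavity and $g_\lambda\to\delta$ in $C^2_{\mathrm{loc}}$, $F\circ\phi_0$ is still $g_\lambda$-geodesically concave, while total convexity of $B^M_o(\rho_0)$ confines every minimal geodesic of $M$ joining points of $\Psi_\lambda(V)$ to $B^M_o(\rho_0)$; hence $\psi_\lambda$ is $F$-concave — in particular weakly $F$-concave — in $\Omega$, and $\|\psi_\lambda\|_{L^\infty}=\|\phi_0\|_{L^\infty}<a$.

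By the preservation hypothesis in either form, $e^{s\Delta_\Omega}\psi_\lambda$ is $F$-concave (resp.\ weakly $F$-concave) in $\Omega$ for all $s>0$, and restricting to the strongly convex ball $B^M_o(\rho_0)$ — where weak and strong $F$-concavity coincide and where every relevant minimal geodesic of $M$ lies — it is $F$-concave in $B^M_o(\rho_0)$. Put $w_\lambda(x,t)\coloneqq(e^{\lambda^2t\Delta_\Omega}\psi_\lambda)(\Psi_\lambda(x))$ for $x\in B^{\mathbb{R}^n}_0(\rho_0/\lambda)$, $t>0$. Using the naturality of $\Delta_M$ under diffeomorphisms and $\Delta_{\lambda^2 g_\lambda}=\lambda^{-2}\Delta_{g_\lambda}$, one checks that $w_\lambda$ solves $\partial_t w_\lambda=\Delta_{g_\lambda}w_\lambda$ on $B^{\mathbb{R}^n}_0(\rho_0/\lambda)\times(0,\infty)$ with $w_\lambda(\cdot,0)=\phi_0$, is $F$-concave on $(B^{\mathbb{R}^n}_0(\rho_0/\lambda),g_\lambda)$, and satisfies $0\le w_\lambda\le\|\phi_0\|_{L^\infty}$. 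As $\lambda\to+0$ the domains $B^{\mathbb{R}^n}_0(\rho_0/\lambda)$ exhaust $\mathbb{R}^n$ and $\Delta_{g_\lambda}\to\Delta_{\mathbb{R}^n}$, so I would compare $w_\lambda$ with the Dirichlet heat flow $\underline w_\lambda$ of $\phi_0$ in $(B^{\mathbb{R}^n}_0(\rho_0/\lambda),g_\lambda)$: the maximum principle gives $0\le w_\lambda-\underline w_\lambda\le\|\phi_0\|_{L^\infty}h_\lambda$, where $h_\lambda$ is the $\Delta_{g_\lambda}$-caloric function with zero initial data and boundary value $1$ on $\partial B^{\mathbb{R}^n}_0(\rho_0/\lambda)$; uniform-in-$\lambda$ Gaussian bounds force $h_\lambda\to0$ locally uniformly, and the same stability plus domain exhaustion gives $\underline w_\lambda(\cdot,t)\to e^{t\Delta_{\mathbb{R}^n}}\phi_0$ locally uniformly, whence $w_\lambda(\cdot,t)\to e^{t\Delta_{\mathbb{R}^n}}\phi_0$ locally uniformly on $\mathbb{R}^n\times(0,\infty)$. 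Since $F$ is continuous on $[0,a)$ including at $0$ — here $\lim_{r\to+0}F(r)=-\infty$ is used — and since the $g_\lambda$-minimal geodesic joining any $x_0,x_1$ converges to $[x_0,x_1]$, letting $\lambda\to+0$ in the $F$-concavity inequality for $w_\lambda(\cdot,t)$ yields that $e^{t\Delta_{\mathbb{R}^n}}\phi_0$ is $F$-concave for every $t>0$. Finally, an arbitrary bounded $F$-concave $\phi$ in $\mathbb{R}^n$ is the pointwise increasing limit of nice data (mollify $F\circ\phi$ and subtract a small negative-definite quadratic on an exhaustion of $\{\phi>0\}$ by bounded convex sets, then apply $F^{-1}$, which is continuous with $F^{-1}(-\infty)=0$ again because $\lim_{r\to+0}F(r)=-\infty$), and increasing limits of $F$-concave functions are $F$-concave, so $e^{t\Delta_{\mathbb{R}^n}}\phi$, being the increasing limit of the $F$-concave functions $e^{t\Delta_{\mathbb{R}^n}}(\text{nice})$, is $F$-concave. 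Thus $F$-concavity is preserved by DHF in $\mathbb{R}^n$, the desired contradiction.

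The hard part is the Euclidean limit — the ``delicate approximations of DHF'' mentioned in the introduction. One must make the convergence $w_\lambda(\cdot,t)\to e^{t\Delta_{\mathbb{R}^n}}\phi_0$ rigorous with constants \emph{uniform in} $\lambda$ as the metrics degenerate ($g_\lambda\to\delta$) on the \emph{expanding} balls $B^{\mathbb{R}^n}_0(\rho_0/\lambda)$: this needs interior parabolic (Schauder) estimates and heat-kernel bounds uniform under this degeneration, uniform control of the flow near $t=0$, and the verification that the part of $w_\lambda$ propagating inward from $\partial B^M_o(\rho_0)$ (that is, $h_\lambda$) is asymptotically negligible. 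By contrast, the strict-concavity perturbation that keeps $\psi_\lambda$ genuinely $F$-concave in $\Omega$, the restriction of $F$-concavity to the totally convex ball, and the final mollification argument are routine once the setup is fixed, as is the reduction of the general case to nice data.
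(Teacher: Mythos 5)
The decisive step in your contrapositive — that the transplanted datum $\psi_\lambda=\phi_0\circ\Psi_\lambda^{-1}$, extended by $0$, is $F$-concave (or even weakly $F$-concave) in $\Omega$ — has a genuine gap, and it is exactly the point the paper's construction is built to handle. Your $\phi_0$ vanishes outside the bounded convex set $V$, so $F\circ\psi_\lambda\equiv-\infty$ outside $\Psi_\lambda(V)$. But $\Psi_\lambda(V)=\exp_o(\lambda V)$ is in general \emph{not} geodesically convex in $M$ for any $\lambda>0$: a minimal geodesic of $M$ joining two points of $\Psi_\lambda(V)$ near its boundary may leave $\Psi_\lambda(V)$, and at such an interior parameter the left-hand side of \eqref{eq:1.1} is $-\infty$ while the right-hand side is finite, so $\psi_\lambda$ is not $F$-concave — and, since inside the strongly convex ball the minimal geodesic is unique, not even weakly $F$-concave. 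Your justification (``the $g_\lambda$-geodesic stays in a fixed neighbourhood of $\overline V$ on which $F\circ\phi_0$ is still $g_\lambda$-geodesically concave'') cannot be right as stated, because $F\circ\phi_0\equiv-\infty$ off $V$; and the obvious repair (take $V$ a centered round ball, so that $\Psi_\lambda(V)$ is a strongly convex metric ball) meets a second obstruction: $|\nabla(F\circ\phi_0)|\to\infty$ at $\partial V$, so the Christoffel correction $\Gamma_{g_\lambda}\cdot\nabla(F\circ\phi_0)$ in $\Hess_{g_\lambda}(F\circ\phi_0)$, though of size $O(\lambda)$ per unit gradient, is not absorbed by the fixed margin $-\delta$ near $\partial V$. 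Until both points are resolved, the preservation hypothesis cannot be applied to $\psi_\lambda$ and the argument does not start. This is precisely why the paper replaces the Euclidean datum by $\psi_m=F^{-1}\bigl(\log\bigl(e^{\varepsilon_m\Delta_{\mathbb{R}^n}}e^{F(\psi_*)}\bigr)-\varepsilon_m|x|^2\bigr)$: it is positive on all of $\mathbb{R}^n$ (so after cutting off at a small strongly convex metric ball the support is the whole ball, hence geodesically convex), it satisfies $\Hess_{\mathbb{R}^n}(F\circ\psi_m)\le-2\varepsilon_m$, and its gradient grows at most linearly (see \eqref{eq:3.5}) — the three properties that let the $\Lambda(r)$-Christoffel error be beaten uniformly in the scaling parameter.

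Two further differences are worth noting, though they are not where the proof fails. The paper argues directly rather than by contraposition: it transplants a single Euclidean failure instance (the strict inequality \eqref{eq:3.1}) into $\Omega$ and shows the failure survives the blow-down limit, so it never needs your final step of approximating an arbitrary bounded $F$-concave datum in $\mathbb{R}^n$ by ``nice'' data — a step which, after the fix above, would again force all approximating supports to have geodesically convex exponential images. And the convergence of the rescaled flows, which you rightly single out as the delicate part but only sketch (barrier $h_\lambda$, exhaustion, uniform Gaussian bounds), is obtained in the paper via interior parabolic regularity, Ascoli--Arzel\`a, and a diagonal argument along a sequence of scales, exploiting the strictness of the violated inequality; your sketch is plausible but the uniform-in-$\lambda$ estimates would have to be written out. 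The item that must actually be repaired is the $F$-concavity in $\Omega$ of the transplanted initial datum.
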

\begin{proof}
We first assume that $F$-concavity is not preserved by DHF in $\mathbb{R}^n$, 
that is, there exists $\psi\in {\mathcal C}_{\mathbb{R}^n}[F]\cap L^\infty(\mathbb{R}^n)$ such that 
$$
F\left((e^{t_*\Delta_{\mathbb{R}^n}}\psi)((1-\tau)y+\tau z)\right)
<(1-\tau)F\left((e^{t_*\Delta_{\mathbb{R}^n}}\psi)(y)\right)
+\tau F\left((e^{t_*\Delta_{\mathbb{R}^n}}\psi)(z)\right)
$$
for some $y$, $z\in\mathbb{R}^n, t_\ast >0$, and $\tau\in(0,1)$. 
Without loss of generality, 
we can assume that $y$, $z$ and the origin are on the same straight line.

 Let $r_*>0$ be such that $\psi_*=\psi\,{\bf 1}_{B_0^{\mathbb{R}^n}(r_*)}$ is $F$-concave in $\mathbb{R}^n$
and 
\begin{equation}\label{eq:3.1}
 F\left((e^{t_*\Delta_{\mathbb{R}^n}}\psi_*)((1-\tau)y+\tau z)\right)
  <(1-\tau)F\left((e^{t_*\Delta_{\mathbb{R}^n}}\psi_*)(y)\right)
+\tau F\left((e^{t_*\Delta_{\mathbb{R}^n}}\psi_*)(z)\right).
\end{equation}
Since $e^{F(\psi_*)}$ is log-concave in $\mathbb{R}^n$,
the preservation of log-concavity together with the strong maximum principle and the smoothing effect by the heat flow in~$\mathbb{R}^n$ implies that 
\begin{equation}
\label{a}
\mbox{$\log\left(e^{t\Delta_{\mathbb{R}^n}}e^{F(\psi_*)}\right)$ is smooth and concave in $\mathbb{R}^n$ for every $t>0$}.
\end{equation}
Furthermore,  by the fact that
\[
\lim_{t\to +0}\left\|e^{t\Delta_{\mathbb{R}^n}}e^{F(\psi_*)}-e^{F(\psi_*)}\right\|_{L^2(K)}=0
 \quad\text{ for all compact sets $K$ of $\mathbb{R}^n$},
 \]
we find a sequence $\{\varepsilon_m\}_{m\in\mathbb{N}}$ such that $\lim_{m\to\infty}\varepsilon_m=0$ and
\begin{equation}
\label{eq:3.2}
\lim_{m\to\infty} \left(e^{\varepsilon_m\Delta_{\mathbb{R}^n}}e^{F(\psi_*)}\right)(x)=e^{F(\psi_*(x))}
\quad\text{for almost all $x\in\mathbb{R}^n$}.
\end{equation}

Now we assume that $\lim_{r\to +0}F(r)=-\infty$, and set 
\[
\psi_m(x)\coloneqq 
F^{-1}\left(\log\left(\left(e^{\varepsilon_m\Delta_{\mathbb{R}^n}}e^{F(\psi_*)}(x)\right)\right)-\varepsilon_m |x|^2)\right)\quad\text{for }x\in{\mathbb R}^n.
\]
Then 
$\psi_m$ is $F$-concave  in $\mathbb{R}^n$. 
In particular, for any $v\in \mathbb{R}^n$, we observe from \eqref{a} that
\begin{equation}\label{eq:3.3}
\Hess_{\mathbb{R}^n}( F\circ \psi_m) (v,v)\le -2\varepsilon_m|v|^2 
\quad \text{in } {\mathbb R}^n.
\end{equation}
Since Lebesgue's dominated convergence theorem with \eqref{eq:3.2}
implies
\begin{align*}
\lim_{m\to\infty}\left(e^{t\Delta_{\mathbb{R}^n}}\psi_m\right)(x)
 & =(4\pi t)^{-\frac{n}{2}}\lim_{m\to\infty}\int_{\mathbb{R}^n}
\exp\left(-\frac{|x-x'|^2}{4t}\right)\psi_m(x')\,dx'
 =\left(e^{t\Delta_{\mathbb{R}^n}}\psi_*\right)(x)
\end{align*}
for $(x,t)\in\mathbb{R}^n\times(0,\infty)$, 
 we observe from \eqref{eq:3.1} that 
\begin{equation}
\label{eq:3.4}
 F\left((e^{t_*\Delta_{\mathbb{R}^n}}\psi_m)((1-\tau)y+\tau z)\right)
 <(1-\tau)F\left((e^{t_*\Delta_{\mathbb{R}^n}}\psi_m)(y)\right)
+\tau F\left((e^{t_*\Delta_{\mathbb{R}^n}}\psi_m)(z)\right)
\end{equation}
for every large enough $m\in \mathbb{N}$.
We fix such $m$.
For $x\in\mathbb{R}^n$, we calculate 
\begin{equation*}
\begin{split}
 \left|\left(\nabla_{\mathbb{R}^n} e^{\varepsilon_m\Delta_{\mathbb{R}^n}}e^{F(\psi_*)}\right)(x)\right|
  & \le\frac{(4\pi \varepsilon_m)^{-\frac{n}{2}}}{2\varepsilon_m}
\int_{B_0^{\mathbb{R}^n}(r_*)}|x-x'|\exp\left(-\frac{|x-x'|^2}{4\varepsilon_m}\right)e^{F(\psi(x'))}\,dx'\\
 & \le\frac{(4\pi \varepsilon_m )^{-\frac{n}{2}}}{2\varepsilon_m}(|x|+r_*)
\int_{B_0^{\mathbb{R}^n}(r_*)}\exp\left(-\frac{|x-x'|^2}{4\varepsilon_m}\right)e^{F(\psi(x'))}\,dx'\\
 & =\frac{|x|+r_*}{2\varepsilon_m}\left(e^{\varepsilon_m\Delta_{\mathbb{R}^n}}e^{F(\psi_*)}\right)(x),
\end{split}
\end{equation*}
which yields
\begin{align}
\label{eq:3.5}
|\nabla_{\mathbb{R}^n} (F\circ \psi_m)(x)|
=
\left|\left(\nabla_{\mathbb{R}^n} \log\left(e^{\varepsilon_m\Delta_{\mathbb{R}^n}}e^{F(\psi_*)}\right)\right)(x)
-2\varepsilon_mx \right|
\leq 
\frac{|x|+r_*}{2\varepsilon_m}+2\varepsilon_m|x|.
\end{align}

Fix $o\in \Omega$ and choose $r>0$ such that 
$B_o^M(r)$ is strongly convex with $B_o^M(r)\subset \Omega$.
Then~$B_o^M(r) $ becomes a normal neighborhood of $o$. 
Since all the Christoffel symbols for the normal coordinate system $(B_o^M(r),\xi)$ 
vanish at $o$, the quantity 
\[
\Lambda(r)\coloneqq \sup_{p\in B_o^M(r)}\max_{1\leq k\leq n}\left\{
\text{the operator norm of the matrix $[\Gamma^k_{ij}(p)]_{1\leq i,j \leq n}$}\right\}
\]
goes to $0$ as $r\to+0$.
For $\lambda>1$, 
define $\phi_\lambda: \Omega\to \mathbb{R}$ by 
$$
\phi_\lambda(p)\coloneqq 
\left\{
\begin{array}{ll}
\psi_m(\lambda\xi(p)) & \mbox{for $p\in B_o^M(r)$},\vspace{3pt}\\
0 & \mbox{otherwise}.
\end{array}
\right.
$$
By the strong convexity of $B_o^M(r)$ 
$\phi_{\lambda}$ is $F$-concave in $\Omega$ if and only if $\phi_\lambda$ is $F$-concave in~$B_o^M(r)$,
which is 
equivalent to the nonpositive definiteness of $\Hess_M (F\circ \phi_\lambda)$ in $B_o^M(r)$.
For $p\in B_o^M(r)$, 
it follows from \eqref{eq:3.3} and \eqref{eq:3.5}  that
\begin{align*}
&\mathrm{Hess}_M (F\circ \phi_\lambda) \left(\sum_{i=1}^n v^i \frac{\partial}{\partial \xi^i}\bigg|_p,\sum_{j=1}^n v^j \frac{\partial}{\partial \xi^j}\bigg|_p\right)\\ 
& =\lambda^2 \mathrm{Hess}_M (F\circ \psi_m)|_{\lambda \xi(p)}(v,v)
-\lambda \sum_{i,j,k=1}^n \Gamma_{ij}^k(p)v^iv^j \frac{\partial (F\circ \psi_m)}{\partial x^k}(\lambda \xi(p))\\
 & \leq
-2\lambda^2 \varepsilon_m|v|^2 
+\lambda \Lambda(r) |v|^2 \left(\frac{\lambda r+r_*}{2\varepsilon_m}+2\varepsilon_m \lambda r\right)\\
 & <-2\lambda^2 \varepsilon_m|v|^2 
\left[1-\Lambda(r)  \left(\frac{r+r_*}{4\varepsilon_m^2}+r\right)\right]
\end{align*}
for $v=(v^1,\ldots,v^n)\in \mathbb{R}^n$.
This is negative if $r>0$ is small enough since $\Lambda(r)\to 0$ as $r\to +0$.
We fix such $r>0$ 
determined independent of $\lambda >1$.

For $i, j, k=1,\ldots,n$, define the functions 
$\tilde{a}_\lambda^{ij}$ and $\tilde{b}_{\lambda}^k$ on $B_0^{\mathbb{R}^n}(\lambda r)$ by 
\begin{align*}
\tilde{a}^{ij}_\lambda(x)\coloneqq g^{ij}(\xi^{-1}(\lambda^{-1}x) ),
\quad
\tilde{b}^{k}_{\lambda}(x)\coloneqq \lambda^{-1} \sum_{i,j=1}^n g^{ij}(\xi^{-1}(\lambda^{-1}x) )\Gamma_{ij}^k(\xi^{-1}(\lambda^{-1}x)),
\end{align*}
for $x\in B_0^{\mathbb{R}^n}(\lambda r)$.
Setting 
\begin{equation*}
\begin{array}{ll}
u_\lambda(p,t)\coloneqq \left(e^{t \Delta_\Omega} \phi_\lambda\right)(p) &\text{for } (p,t)\in\Omega \times (0,\infty),\vspace{5pt}\\
\tilde{u}_\lambda(x,t)\coloneqq u_\lambda(\xi^{-1}(\lambda^{-1}x),\lambda^{-2}t)
 &\text{for } (x,t)\in B_0^{\mathbb{R}^n}(\lambda r) \times (0,\infty),
\end{array}
\end{equation*}
we see that
\[
\begin{dcases}
\frac{\partial}{\partial t} \tilde{u}_\lambda=
\sum_{i,j=1}^n \tilde{a}^{ij}_\lambda\frac{\partial^2}{\partial x^i\partial x^j} \tilde{u}_\lambda
-\sum_{k=1}^n \tilde{b}^{k}_\lambda \frac{\partial }{\partial x^k}\tilde{u}_\lambda
& \text{in\ }B_0^{\mathbb{R}^n}(\lambda r)\times (0,\infty),\\
\tilde{u}_\lambda(\cdot,0)=\psi_m & \text{in\ } B_0^{\mathbb{R}^n}(\lambda r).
\end{dcases}
\]
Thanks to parabolic regularity theorem and the Ascoli--Arzel\'a theorem, 
applying the diagonal argument, we find a sequence $\{\lambda_\ell\}_{\ell\in\mathbb{N}}\subset(0,\infty)$ 
with $\lim_{\ell\to\infty}\lambda_\ell=\infty$ such that
\[
\lim_{\ell\to\infty} \tilde{u}_{\lambda_\ell}(x,t)=(e^{t\Delta_{\mathbb{R}^n}}\psi_m)(x)
\]
uniformly for all compact sets in $\mathbb{R}^n\times(0,\infty)$.
This together with \eqref{eq:3.4} implies that 
\begin{align}
\label{eq:3.6}
 F(\tilde{u}_{\lambda_\ell}((1-\tau)y+\tau z,t_*))
 <(1-\tau)F(\tilde{u}_{\lambda_\ell}(y,t_*))+\tau F(\tilde{u}_{\lambda_\ell}(z,t_*))
\end{align}
for every  large enough $\ell\in\mathbb{N}$ 
with $\lambda_\ell^{-1}y,\lambda_\ell^{-1}z\in B_0^{\mathbb{R}^n}(r)$.
Since  $y$, $z$, and the origin are on the same straight line,
\[
\tau\mapsto  \xi^{-1}\left(  (1-\tau)\lambda_\ell^{-1} y + \tau \lambda_\ell^{-1} z \right)\in B_o^M(r) \quad \text{for }\tau \in[0,1]
\]
forms a unique minimal geodesic from $\xi^{-1}(\lambda_\ell^{-1}y)$ to $\xi^{-1}(\lambda_\ell^{-1}z)$. 
Thus, by \eqref{eq:3.6} $u_{\lambda_\ell}(\cdot,\lambda_\ell^{-2}t_*)$ is 
not weakly $F$-concave in $\Omega$ hence  
neither $F$-concavity  nor weakly $F$-concavity is preserved by DHF in $\Omega$. 
Thus Lemma~\ref{theorem:3.2} follows.
\end{proof}
We modify the argument in the proof of Lemma~\ref{theorem:3.2} to prove the following two lemmas.
\begin{lemma}
\label{theorem:3.3}
Assume that $F$-concavity is strictly weaker than log-concavity in ${\mathcal A}_{{\mathbb R}^n}(a)$. 
Then there exists $\phi\in {\mathcal C}_\Omega[F]\cap L^\infty(\Omega)$ such that 
$e^{t_*\Delta_\Omega}\phi$ is not weakly quasi-concave in $\Omega$ for some $t_*>0$. 
In particular, 
neither quasi-concavity nor weak quasi-concavity is preserved by DHF in $\Omega$.
\end{lemma}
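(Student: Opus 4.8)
The plan is to transplant the non-preservation of quasi-concavity by DHF in $\mathbb{R}^n$ furnished by Proposition~\ref{theorem:2.2}\,(3) into $\Omega$ by means of the localization-and-rescaling device from the proof of Lemma~\ref{theorem:3.2}. Note first that the auxiliary condition $\lim_{r\to+0}F(r)=-\infty$ is automatic here: since $F$-concavity is strictly weaker, hence weaker, than log-concavity in $\mathcal{A}_{\mathbb{R}^n}(a)$ and $\lim_{r\to+0}\Phi_0(r)=-\infty$, Lemma~\ref{theorem:2.4} gives $\lim_{r\to+0}F(r)=-\infty$. Applying Proposition~\ref{theorem:2.2}\,(3) with the convex domain taken to be $\mathbb{R}^n$ (so that the boundary condition is vacuous), we obtain a bounded continuous $F$-concave function $\phi$ on $\mathbb{R}^n$ together with $t_*>0$, points $y,z\in\mathbb{R}^n$, and $\tau\in(0,1)$ such that
\[
(e^{t_*\Delta_{\mathbb{R}^n}}\phi)((1-\tau)y+\tau z)
<\min\{(e^{t_*\Delta_{\mathbb{R}^n}}\phi)(y),\,(e^{t_*\Delta_{\mathbb{R}^n}}\phi)(z)\}.
\]
After a translation of $\phi$ we may assume that $y$, $z$, and the origin lie on a single straight line.

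We then repeat the localization and regularization from the proof of Lemma~\ref{theorem:3.2}. Set $\phi_*\coloneqq\phi\,{\bf 1}_{B_0^{\mathbb{R}^n}(r_*)}$; since the ball is convex and $F(0)=-\infty$, $\phi_*$ is still $F$-concave in $\mathbb{R}^n$, and since $\phi$ is continuous and the closed ball is compact, $\sup_{B_0^{\mathbb{R}^n}(r_*)}\phi<a$. By Lebesgue's dominated convergence theorem for the heat semigroup, the strict inequality above persists with $\phi$ replaced by $\phi_*$ once $r_*$ is large. Using that $e^{F(\phi_*)}$ is bounded, compactly supported, and log-concave, that $\log(e^{\varepsilon\Delta_{\mathbb{R}^n}}e^{F(\phi_*)})$ is smooth and concave for every $\varepsilon>0$, and choosing a null sequence $\varepsilon_m\to0$ along which $e^{\varepsilon_m\Delta_{\mathbb{R}^n}}e^{F(\phi_*)}\to e^{F(\phi_*)}$ almost everywhere, we put
\[
\phi_m\coloneqq F^{-1}\big(\log(e^{\varepsilon_m\Delta_{\mathbb{R}^n}}e^{F(\phi_*)})-\varepsilon_m|x|^2\big),
\]
which is well defined thanks to $\lim_{r\to+0}F(r)=-\infty$ and $\sup_{B_0^{\mathbb{R}^n}(r_*)}\phi<a$. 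As in the proof of Lemma~\ref{theorem:3.2}, $\phi_m$ is $F$-concave in $\mathbb{R}^n$, satisfies $\Hess_{\mathbb{R}^n}(F\circ\phi_m)(v,v)\le-2\varepsilon_m|v|^2$ together with the gradient bound \eqref{eq:3.5}, and $e^{t\Delta_{\mathbb{R}^n}}\phi_m\to e^{t\Delta_{\mathbb{R}^n}}\phi_*$ pointwise; hence, after fixing $m$ large enough, the strict inequality above holds with $\phi_m$ in place of $\phi$.

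We finally transplant $\phi_m$ to $\Omega$ exactly as in the proof of Lemma~\ref{theorem:3.2}. Choose $o\in\Omega$ and a strongly convex ball $B_o^M(r)\subset\Omega$ that is a normal neighborhood of $o$, and for $\lambda>1$ put $\phi_\lambda(p)\coloneqq\phi_m(\lambda\xi(p))$ on $B_o^M(r)$ and $\phi_\lambda\coloneqq0$ otherwise. The normal-coordinate Hessian computation in that proof, now using the bounds on $\Hess_{\mathbb{R}^n}(F\circ\phi_m)$ and $\nabla_{\mathbb{R}^n}(F\circ\phi_m)$ together with $\Lambda(r)\to0$, shows that $\phi_\lambda\in\mathcal{C}_\Omega[F]\cap L^\infty(\Omega)$ for every $\lambda>1$ once $r$ is chosen small, independently of $\lambda$. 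Rescaling $u_\lambda\coloneqq e^{t\Delta_\Omega}\phi_\lambda$ to $\tilde u_\lambda(x,t)\coloneqq u_\lambda(\xi^{-1}(\lambda^{-1}x),\lambda^{-2}t)$ on $B_0^{\mathbb{R}^n}(\lambda r)\times(0,\infty)$, the functions $\tilde u_\lambda$ solve a uniformly parabolic equation whose coefficients converge to those of the heat equation, so by parabolic regularity, the Ascoli--Arzel\'a theorem, and a diagonal argument there is $\lambda_\ell\to\infty$ with $\tilde u_{\lambda_\ell}\to e^{t\Delta_{\mathbb{R}^n}}\phi_m$ locally uniformly on $\mathbb{R}^n\times(0,\infty)$. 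Hence, for $\ell$ so large that $\lambda_\ell^{-1}y,\lambda_\ell^{-1}z\in B_0^{\mathbb{R}^n}(r)$,
\[
\tilde u_{\lambda_\ell}((1-\tau)y+\tau z,t_*)
<\min\{\tilde u_{\lambda_\ell}(y,t_*),\,\tilde u_{\lambda_\ell}(z,t_*)\}.
\]
Since $y$, $z$, and the origin lie on the same straight line, $\tau\mapsto\xi^{-1}((1-\tau)\lambda_\ell^{-1}y+\tau\lambda_\ell^{-1}z)$ is the unique minimal geodesic in $B_o^M(r)$ from $\xi^{-1}(\lambda_\ell^{-1}y)$ to $\xi^{-1}(\lambda_\ell^{-1}z)$; therefore, with $\phi\coloneqq\phi_{\lambda_\ell}$, the function $e^{\lambda_\ell^{-2}t_*\Delta_\Omega}\phi$ violates the defining inequality of weak quasi-concavity along this geodesic, that is, it is not weakly quasi-concave in $\Omega$. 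Since $\phi_{\lambda_\ell}$ is $F$-concave it is quasi-concave, hence weakly quasi-concave, whereas a function that is not weakly quasi-concave is not quasi-concave either; consequently neither quasi-concavity nor weak quasi-concavity is preserved by DHF in $\Omega$.

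As the proof is a near-verbatim adaptation of that of Lemma~\ref{theorem:3.2}, I do not expect an essential obstacle. The only points requiring a little care are that the $F$-concave datum supplied by Proposition~\ref{theorem:2.2}\,(3) can be recentered so that $y$, $z$, and the origin lie on one line and truncated to a ball without losing $F$-concavity or the strict inequality --- immediate from $F(0)=-\infty$ and the continuity of the heat semigroup --- and that one tracks the violation in the form $u(w)<\min\{u(y),u(z)\}$ (with $w$ the midpoint $(1-\tau)y+\tau z$) rather than in the affine form \eqref{eq:3.6}; the former is in fact more robust, since such a strict inequality is stable under the pointwise and locally uniform convergences used above, so nothing beyond the work already carried out in the proof of Lemma~\ref{theorem:3.2} is needed.
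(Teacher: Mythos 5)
Your proposal is correct and follows essentially the same route as the paper: deduce $\lim_{r\to+0}F(r)=-\infty$ from Lemma~\ref{theorem:2.4}, invoke Proposition~\ref{theorem:2.2}\,(3) in $\mathbb{R}^n$ to get the Euclidean counterexample \eqref{eq:3.7}, and then transplant it to $\Omega$ by repeating the truncation, mollification, normal-coordinate rescaling, and parabolic-limit argument of Lemma~\ref{theorem:3.2}, tracking the strict min-inequality instead of the $F$-affine one. The extra details you spell out (well-definedness of $F^{-1}$ via $\sup_{B_0^{\mathbb{R}^n}(r_*)}\phi<a$, stability of the strict inequality under the truncation and the pointwise limits) are exactly the points the paper leaves implicit, so there is nothing to correct.
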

\begin{proof}
Assume that $F$-concavity is strictly weaker than log-concavity in ${\mathcal A}_{{\mathbb R}^n}(a)$. 
It follows from Lemma~\ref{theorem:2.4} that
\[
\lim_{r\to +0}F(r)=-\infty.
\]
By Proposition~\ref{theorem:2.2}~(3) we find $\psi\in {\mathcal C}_{{\mathbb R}^n}[F]\cap L^\infty({\mathbb R}^n)$ such that
\begin{equation}
\label{eq:3.7}
\left(e^{t_*\Delta_{{\mathbb R}^n}}\psi\right)((1-\tau)y+\tau z)
<\min\left\{\left(e^{t_*\Delta_{{\mathbb R}^n}}\psi\right)(y),\left(e^{t_*\Delta_{{\mathbb R}^n}}\psi\right)(z)\right\}
\end{equation}
for some $y$, $z\in{\mathbb R}^n$, $\tau\in(0,1)$, and $t_*>0$. 
Without loss of generality, 
we can assume that $y$, $z$ and the origin are on the same straight line.
Then we apply the same argument as in the proof of Lemma~\ref{theorem:3.2} 
to define $F$-concave functions $\{\phi_\lambda\}_{\lambda>1}$.
Furthermore, by \eqref{eq:3.7} we find a sequence $\{\lambda_\ell\}_{\ell\in{\mathbb N}}$ such that 
\begin{align}
\begin{split}\label{b}
 & \left(e^{t\Delta_\Omega}\phi_{\lambda_{\ell}}\right)\left(\xi^{-1}\left(  (1-\tau)\lambda_\ell^{-1} y + \tau \lambda_\ell^{-1} z \right)\right)\\
 & <\min\left\{\left(e^{t\Delta_\Omega}\phi_{\lambda_{\ell}}\right)\left(\xi^{-1}(\lambda_\ell^{-1}y)\right),
 \left(e^{t\Delta_\Omega}\phi_{\lambda_{\ell}}\right)\left(\xi^{-1}(\lambda_\ell^{-1}z)\right)\right\}
\end{split}
\end{align}
for every large enough $\ell\in{\mathbb N}$ with $\lambda_\ell^{-1}y,\lambda_\ell^{-1}z\in B_0^{\mathbb{R}^n}(r)$,
where $(B_o^M(r),\xi)$ is a normal coordinate system at $o$.
Then, similarly to the proof of Lemma~\ref{theorem:3.2}, 
we obtain the desired conclusion. 
\end{proof}
\begin{lemma}
\label{theorem:3.4}
Assume $\lim_{r\to +0}F(r)>-\infty$. 
Then there exists $\infty$-concave function $\phi$ in~$\Omega$ such that $\|\phi\|_{L^\infty(\Omega)}<a$ and 
$e^{t_*\Delta_\Omega}\phi$ is not weakly $F$-concave in $\Omega$ for some $t_*>0$. 
In particular, 
neither $\infty$-concavity nor weak $\infty$-concavity is preserved by DHF in $\Omega$.
\end{lemma}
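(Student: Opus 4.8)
The plan is to rerun the transplantation-and-rescaling argument from the proof of Lemma~\ref{theorem:3.2}, but to feed it an $\infty$-concave Euclidean datum instead of the smooth $F$-concave one used there. The structural observation behind the choice of datum is that a nonnegative $\infty$-concave function in the sense of Definition~\ref{theorem:1.2} takes at most one positive value, on a totally convex set (if it took values $0<v_1<v_2$ at the endpoints of a minimal geodesic, the defining inequality would force $v_1\ge v_2$); hence the datum must be, up to a positive factor, the indicator of a totally convex set, and the convenient choice is the indicator of a small metric ball, whose radial transplant into a normal chart is again a metric ball --- a transplanted Euclidean ball or half-space would in general fail to be totally convex on a curved manifold. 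Granting this, the Dirichlet heat flow of such a datum fails $F$-concavity simply because $\lim_{r\to+0}F(r)$ is finite; the one analytically delicate point, the passage to the Euclidean limit with a discontinuous datum, is handled exactly as in Lemma~\ref{theorem:3.2}.

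First I would fix $b\in(0,a)$, set $\psi\coloneqq b\,\mathbf 1_{B_0^{\mathbb{R}^n}(1)}$ and $t_*>0$, and put $w\coloneqq e^{t_*\Delta_{\mathbb{R}^n}}\mathbf 1_{B_0^{\mathbb{R}^n}(1)}$; then $w$ is radial and smooth with $0<w<1$, $w\to0$ at infinity, and $w(0)>w(2e_1)$ because $|y|<|2e_1-y|$ for $y\in B_0^{\mathbb{R}^n}(1)$, where $e_1=(1,0,\ldots,0)$. The radial profile $h(s)\coloneqq F\big(b\,w(se_1)\big)$, $s\in\mathbb{R}$, is then continuous, non-constant, and (here the hypothesis enters) has the two finite limits $\lim_{s\to\pm\infty}h(s)=\lim_{r\to+0}F(r)$. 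A concave function on $\mathbb{R}$ with finite limits at both ends is constant (a finite limit at $+\infty$ forces it non-decreasing, one at $-\infty$ non-increasing), so $h$ is not concave, and there are $y_1<z_1$ and $\tau\in(0,1)$ such that, with $y\coloneqq y_1e_1$ and $z\coloneqq z_1e_1$,
\begin{equation}\label{eq:euc}
F\big((e^{t_*\Delta_{\mathbb{R}^n}}\psi)((1-\tau)y+\tau z)\big)<(1-\tau)F\big((e^{t_*\Delta_{\mathbb{R}^n}}\psi)(y)\big)+\tau F\big((e^{t_*\Delta_{\mathbb{R}^n}}\psi)(z)\big);
\end{equation}
since $y$ and $z$ lie on the $x^1$-axis, the points $y$, $z$ and the origin are collinear.

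Next I would fix $o\in\Omega$ and $r>0$ with $B_o^M(r)\subset\Omega$ strongly convex, take the normal coordinate system $(B_o^M(r),\xi)$ at $o$, and for $\lambda>1$ with $1/\lambda<r$ define $\phi_\lambda(p)\coloneqq\psi(\lambda\xi(p))$ for $p\in B_o^M(r)$ and $\phi_\lambda\coloneqq0$ otherwise. Since $\xi$ carries metric balls centred at $o$ onto Euclidean balls centred at the origin, this is nothing but $\phi_\lambda=b\,\mathbf 1_{B_o^M(1/\lambda)}$; and since $B_o^M(1/\lambda)$ is strongly convex, hence totally convex, $\phi_\lambda$ is a bounded $\infty$-concave function in $\Omega$ with $\|\phi_\lambda\|_{L^\infty(\Omega)}=b<a$, with no Hessian estimate needed, in contrast to Lemma~\ref{theorem:3.2}. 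Setting $u_\lambda(p,t)\coloneqq(e^{t\Delta_\Omega}\phi_\lambda)(p)$ and $\tilde u_\lambda(x,t)\coloneqq u_\lambda(\xi^{-1}(\lambda^{-1}x),\lambda^{-2}t)$ on $B_0^{\mathbb{R}^n}(\lambda r)\times(0,\infty)$, the function $\tilde u_\lambda$ solves the rescaled parabolic equation with coefficients $\tilde a^{ij}_\lambda(x)=g^{ij}(\xi^{-1}(\lambda^{-1}x))$ and $\tilde b^k_\lambda(x)=\lambda^{-1}\sum_{i,j}g^{ij}\Gamma_{ij}^k(\xi^{-1}(\lambda^{-1}x))$ and initial datum $\tilde u_\lambda(\cdot,0)=\psi$. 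Exactly as in the proof of Lemma~\ref{theorem:3.2} --- interior parabolic estimates on $\{t\ge\delta\}$, the Ascoli--Arzel\'a theorem, a diagonal argument, and the uniqueness of bounded solutions of the heat equation on $\mathbb{R}^n$ to identify the limit --- one finds a sequence $\lambda_\ell\to\infty$ with $\tilde u_{\lambda_\ell}\to e^{t\Delta_{\mathbb{R}^n}}\psi$ locally uniformly on $\mathbb{R}^n\times(0,\infty)$.

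Finally, for $\ell$ large one has $\lambda_\ell^{-1}y,\lambda_\ell^{-1}z\in B_0^{\mathbb{R}^n}(r)$, and \eqref{eq:euc}, the local uniform convergence, and the continuity of $F$ yield
\[
F\big(\tilde u_{\lambda_\ell}((1-\tau)y+\tau z,t_*)\big)<(1-\tau)F\big(\tilde u_{\lambda_\ell}(y,t_*)\big)+\tau F\big(\tilde u_{\lambda_\ell}(z,t_*)\big).
\]
Since $y$, $z$ and the origin are collinear, $\tau\mapsto\xi^{-1}\big(\lambda_\ell^{-1}((1-\tau)y+\tau z)\big)$, $\tau\in[0,1]$, is the unique minimal geodesic joining its endpoints (it lies in the strongly convex ball $B_o^M(r)$), and rewriting the displayed inequality in terms of $u_{\lambda_\ell}=e^{t\Delta_\Omega}\phi_{\lambda_\ell}$ at time $\lambda_\ell^{-2}t_*$ shows that $e^{\lambda_\ell^{-2}t_*\Delta_\Omega}\phi_{\lambda_\ell}$ is not weakly $F$-concave in $\Omega$, which gives the first assertion with $\phi\coloneqq\phi_{\lambda_\ell}$. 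For the last sentence of the lemma, any weakly $\infty$-concave function $g$ is weakly $F$-concave, because $g(c_\tau)\ge\max\{g(c_0),g(c_1)\}$ implies $F(g(c_\tau))\ge\max\{F(g(c_0)),F(g(c_1))\}\ge(1-\tau)F(g(c_0))+\tau F(g(c_1))$; hence $e^{\lambda_\ell^{-2}t_*\Delta_\Omega}\phi_{\lambda_\ell}$ is neither weakly $\infty$-concave nor $\infty$-concave, so neither $\infty$-concavity nor weak $\infty$-concavity is preserved by DHF in $\Omega$. The step I expect to require genuine care is the passage to the Euclidean limit with the merely bounded, discontinuous datum $\psi$; everything else is the bookkeeping of Lemma~\ref{theorem:3.2} together with the elementary fact that a non-constant concave function on $\mathbb{R}$ cannot have finite limits at both ends.
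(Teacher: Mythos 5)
Your proof is correct and follows essentially the same route as the paper's: both take $\psi$ to be a positive multiple of $\mathbf{1}_{B_0^{\mathbb{R}^n}(1)}$, use the decay of $e^{t_*\Delta_{\mathbb{R}^n}}\psi$ at infinity together with the finiteness of $\lim_{r\to+0}F(r)$ to show that a concave profile along a line (resp.\ ray, in the paper, with $t_*=1$) through the origin would have to be constant, hence that $F$-concavity fails at collinear points, and then run the transplantation/rescaling of Lemma~\ref{theorem:3.2} with the datum $b\,\mathbf{1}_{B_o^M(1/\lambda)}$, whose $\infty$-concavity replaces the Hessian estimate. One caveat: your motivating ``structural observation'' that an $\infty$-concave function takes at most one positive value is false (on $\mathbb{R}$, the function $2\cdot\mathbf{1}_{(0,1)}+\mathbf{1}_{\{0,1\}}$ is $\infty$-concave in the sense of Definition~\ref{theorem:1.2}, since the defining inequality constrains interior values, not the two endpoint values against each other), but this is only motivation and does not affect the proof, because the datum you actually use is checked to be $\infty$-concave directly via the convexity of the small geodesic ball.
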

\begin{proof}
Let $a'\in(0,a)$ and set  
$\psi\coloneqq a'{\bf 1}_{B_o^{{\mathbb R}^n}(1)}$. 
Then $\psi$ is $\infty$-concave in ${\mathbb R}^n$. 
Since $(e^{\Delta_{{\mathbb R}^n}}\psi)(x)$ vanishes at infinity,
we find $x_*\in{\mathbb R}^n$ such that 
$$
(e^{\Delta_{{\mathbb R}^n}}\psi)(0)>(e^{\Delta_{{\mathbb R}^n}}\psi)(x_*).
$$
This together with the strict monotonicity of $F$ yields
$$
F((e^{\Delta_{{\mathbb R}^n}}\psi)(0))>F((e^{\Delta_{{\mathbb R}^n}}\psi)(x_*)). 
$$
If $e^{\Delta_{{\mathbb R}^n}}\psi\in{\mathcal C}_{{\mathbb R}^n}[F]$, then the function $f$ in $[0,\infty)$ 
defined by 
$$
f(s):=F(e^{\Delta_{{\mathbb R}^n}}\psi(s x_*))\quad \mbox{for $s\in[0,\infty)$}
$$
is concave in $[0,\infty)$. Then we see that 
$$
-\infty=\lim_{s\to\infty}f(s)=\lim_{r\to +0}F(r)>-\infty,
$$
which is a contradiction.
Thus $e^{\Delta_{{\mathbb R}^n}}\psi\notin{\mathcal C}_{{\mathbb R}^n}[F]$
and there exist  $y$, $z\in{\mathbb R}^n$ and $\tau\in(0,1)$ such that 
\begin{equation}
\label{eq:3.8}
F\left((e^{\Delta_{\mathbb{R}^n}}\psi)((1-\tau)y+\tau z)\right)
<(1-\tau)F\left((e^{\Delta_{\mathbb{R}^n}}\psi)(y)\right)
+\tau F\left((e^{\Delta_{\mathbb{R}^n}}\psi)(z)\right).
\end{equation}
Then we apply the same argument as in the proof of Lemma~\ref{theorem:3.2} 
to define $\infty$-concave functions $\{\phi_\lambda\}_{\lambda>1}$ in $\Omega$ by using $\psi$ instead of $\psi_m$.
Furthermore, by \eqref{eq:3.8} we have the same inequality as \eqref{eq:3.6}
and  obtain the desired conclusion.
\end{proof}
Combining these three lemmas, we have the following proposition.
\begin{proposition}
\label{theorem:3.1}
If either $\overline{F}$-concavity or weak $\overline{F}$-concavity is  preserved by DHF in $\Omega$,
then  $\overline{F}$-concavity is  preserved by DHF in $\mathbb{R}^n$.
\end{proposition}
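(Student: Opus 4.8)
The plan is to prove the contrapositive for each of the three notions of concavity appearing in an $\overline{F}$-concavity statement (genuine $F$-concavity for an admissible $F$, quasi-concavity, and $\infty$-concavity), using Lemmas~\ref{theorem:3.2}, \ref{theorem:3.3}, and \ref{theorem:3.4} as the three building blocks. So suppose that $\overline{F}$-concavity is \emph{not} preserved by DHF in $\mathbb{R}^n$; I want to show that neither $\overline{F}$-concavity nor weak $\overline{F}$-concavity is preserved by DHF in $\Omega$. The case analysis is dictated by which genuine concavity the symbol $\overline{F}$ stands for.

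First I would dispose of the case where $\overline{F}$-concavity is $\infty$-concavity. If $\infty$-concavity is to be treated, apply Lemma~\ref{theorem:3.4}: whenever $\lim_{r\to +0}F(r)>-\infty$ it already produces an $\infty$-concave $\phi$ in $\Omega$ with $\|\phi\|_{L^\infty}<a$ whose DHF image is not even weakly $F$-concave; taking $F$ to be, say, $H_a$ (or any admissible $F$ with finite limit at $0$ is unnecessary here—one just needs \emph{some} admissible $F$) shows directly that neither $\infty$-concavity nor weak $\infty$-concavity survives DHF in $\Omega$. Actually the cleanest route is: Lemma~\ref{theorem:3.4} applied with \emph{any} admissible $F$ on $[0,a)$ with $\lim_{r\to 0+}F(r)>-\infty$ gives non-preservation of $\infty$-concavity in $\Omega$; and since $\infty$-concavity is the strongest concavity, this is the relevant statement. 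For quasi-concavity, I would use Lemma~\ref{theorem:3.3}: non-preservation of $\overline{F}$-concavity in $\mathbb{R}^n$ forces, via Proposition~\ref{theorem:2.2}\,\eqref{enumi:1}, that $F$-concavity is strictly weaker than log-concavity in $\mathcal{A}_{\mathbb{R}^n}(a)$ (here using that hot-concavity $H_a$-concavity is preserved in $\mathbb{R}^n$ and is the strongest preserved one, so anything strictly weaker than $H_a$-concavity down to log-concavity is the relevant range); then Lemma~\ref{theorem:3.3} yields a $\phi\in\mathcal{C}_\Omega[F]\cap L^\infty(\Omega)$ whose DHF image is not weakly quasi-concave, hence neither quasi-concavity nor weak quasi-concavity is preserved in $\Omega$.

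For the middle range—genuine $F$-concavity with $F$ admissible on $[0,a)$—I would split on the value $\lim_{r\to 0+}F(r)$. If this limit is $-\infty$, then non-preservation of $F$-concavity by DHF in $\mathbb{R}^n$ together with Lemma~\ref{theorem:3.2} immediately gives that neither $F$-concavity nor weak $F$-concavity is preserved in $\Omega$, which is exactly the desired conclusion. If instead $\lim_{r\to 0+}F(r)>-\infty$, then $F$-concavity is automatically not preserved by DHF in $\mathbb{R}^n$ (by Proposition~\ref{theorem:2.2}\,\eqref{enumi:1} and Lemma~\ref{theorem:2.4}, since $\lim_{r\to 0+}H_a(r)=-\infty$ while $F$ would have to be weaker than $H_a$-concavity), and Lemma~\ref{theorem:3.4} directly produces in $\Omega$ an $\infty$-concave—hence $F$-concave—function $\phi$ with $\|\phi\|_{L^\infty(\Omega)}<a$ whose DHF image is not weakly $F$-concave; so again neither $F$-concavity nor weak $F$-concavity is preserved in $\Omega$. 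Assembling the three cases, in every instance the failure of preservation of $\overline{F}$-concavity (or weak $\overline{F}$-concavity) in $\mathbb{R}^n$ propagates to a failure in $\Omega$; taking the contrapositive gives the proposition.

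The main obstacle is not in the logical bookkeeping above—which is routine once the three lemmas are in hand—but in making sure the dichotomy $\lim_{r\to 0+}F(r)=-\infty$ versus $>-\infty$ is genuinely exhaustive and that Proposition~\ref{theorem:2.2} is being invoked on the right domain. Note Proposition~\ref{theorem:2.2} is stated for convex domains of $\mathbb{R}^n$, and here the relevant ambient space for the "$\mathbb{R}^n$" half of the statement is $\mathbb{R}^n$ itself (or equivalently, by Lemma~\ref{theorem:2.3}, $\mathbb{R}$), so one should be careful to phrase the hypothesis "$\overline{F}$-concavity is not preserved by DHF in $\mathbb{R}^n$" and its consequences purely in terms of $\mathbb{R}^n$, invoking Lemma~\ref{theorem:2.3} only to translate set-inclusion statements $\mathcal{C}_{\mathbb{R}}[F_2]\subset\mathcal{C}_{\mathbb{R}}[F_1]$ between $\Omega$ and $\mathbb{R}^n$ when needed. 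Everything else is a direct citation of the three preceding lemmas.
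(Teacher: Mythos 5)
Your decomposition---arguing the contrapositive and splitting $\overline{F}$-concavity into genuine $F$-concavity (with the dichotomy $\lim_{r\to+0}F(r)=-\infty$ handled by Lemma~\ref{theorem:3.2} and $\lim_{r\to+0}F(r)>-\infty$ by Lemma~\ref{theorem:3.4}, using that an $\infty$-concave function with supremum below $a$ is $F$-concave), quasi-concavity (the ``in particular'' clause of Lemma~\ref{theorem:3.3}), and $\infty$-concavity (the ``in particular'' clause of Lemma~\ref{theorem:3.4})---is exactly the paper's intended proof, which it leaves implicit with the phrase ``combining these three lemmas.'' Two local justifications should be cleaned up, though neither damages the argument: in the quasi-concavity case Proposition~\ref{theorem:2.2}~(1) cannot be run backwards from non-preservation in $\mathbb{R}^n$ (and quasi-concavity carries no admissible $F$ to begin with); one simply instantiates Lemma~\ref{theorem:3.3} with any admissible $F$ strictly weaker than log-concavity, e.g.\ $\Phi_{-1}$, whose conclusion about quasi-concavity is independent of the original $\overline{F}$; and in the $\infty$-concavity case the parenthetical choice $F=H_a$ violates the hypothesis of Lemma~\ref{theorem:3.4} since $\lim_{r\to+0}H_a(r)=-\infty$, so one should take an admissible $F$ with finite limit at $0$, e.g.\ $\Phi_1$, as your ``cleanest route'' sentence in fact does.
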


Next, we show that the preservation of $\overline{F}$-concavity implies that of log-concavity.
\begin{proposition}
\label{theorem:3.5} 
Assume that some $F$-concavity \textup{(}resp.\,weak $F$-concavity\textup{)} is preserved by DHF in $\Omega$.
Then log-concavity \textup{(}resp.\,weak log-concavity\textup{)}is preserved by DHF in~$\Omega$.
\end{proposition}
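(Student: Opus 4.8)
The plan is to reduce the statement to Euclidean space via Proposition~\ref{theorem:3.1}, then invoke the characterization of $\overline{F}$-concavity preserved by DHF on $\mathbb{R}^n$ from Proposition~\ref{theorem:2.2}, and finally transfer the conclusion back to $\Omega$ through the approximation lemmas. First, suppose that $F$-concavity (resp.\ weak $F$-concavity) is preserved by DHF in $\Omega$. By Proposition~\ref{theorem:3.1}, $F$-concavity is preserved by DHF in $\mathbb{R}^n$. Then Proposition~\ref{theorem:2.2}~(1) applies: $F$-concavity is stronger than log-concavity in $\mathcal{A}_{\mathbb{R}^n}(a)$, i.e.\ $\mathcal{C}_{\mathbb{R}^n}[F]\cap\mathcal{A}_{\mathbb{R}^n}(a)\subset\mathcal{C}_{\mathbb{R}^n}[\Phi_0]\cap\mathcal{A}_{\mathbb{R}^n}(a)$, where $\Phi_0(r)=\log r$. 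Since we need to compare admissible functions on a common interval, one should pass to $\min\{a,a'\}$ for $a'$ arising in what follows, but this is harmless.

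Next I would take an arbitrary bounded log-concave (resp.\ weakly log-concave) function $\phi$ in $\Omega$ and aim to show $e^{t\Delta_\Omega}\phi$ is log-concave (resp.\ weakly log-concave) in $\Omega$ for every $t>0$. Here the key tool is Lemma~\ref{theorem:2.5}: there is a sequence $\{\phi_a\}_{a>0}$ with $\phi_a$ $H_a$-concave (resp.\ weakly $H_a$-concave) in $\Omega$ and $\phi_a\to\phi$ uniformly on $\Omega$ as $a\to\infty$. By Proposition~\ref{theorem:2.2}~(1), $F$-concavity is weaker than $H_a$-concavity in $\mathcal{A}_{\mathbb{R}^n}(a)$, so by Lemma~\ref{theorem:2.3} every $H_a$-concave (resp.\ weakly $H_a$-concave) function in $\Omega$ is $F$-concave (resp.\ weakly $F$-concave) in $\Omega$; hence each $\phi_a$ lies in $\mathcal{C}_\Omega[F]$ (resp.\ is weakly $F$-concave), and after replacing $\phi_a$ by a suitable truncation we may assume $\phi_a\in\mathcal{C}_\Omega[F]\cap\mathcal{A}_\Omega(a)\cap L^\infty(\Omega)$. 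Applying the hypothesis, $e^{t\Delta_\Omega}\phi_a$ is $F$-concave (resp.\ weakly $F$-concave) in $\Omega$ for every $t>0$, and since $F$-concavity implies log-concavity in $\Omega$ (again via Lemma~\ref{theorem:2.3} applied to $\mathcal{C}_{\mathbb{R}^n}[F]\subset\mathcal{C}_{\mathbb{R}^n}[\Phi_0]$), $e^{t\Delta_\Omega}\phi_a$ is log-concave (resp.\ weakly log-concave) in $\Omega$.

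Finally I would pass to the limit $a\to\infty$. By uniform convergence $\phi_a\to\phi$ and the contraction property of DHF with respect to $\|\cdot\|_{L^\infty}$ (the maximum principle gives $\|e^{t\Delta_\Omega}\phi_a-e^{t\Delta_\Omega}\phi\|_{L^\infty(\Omega)}\le\|\phi_a-\phi\|_{L^\infty(\Omega)}$), we get $e^{t\Delta_\Omega}\phi_a\to e^{t\Delta_\Omega}\phi$ uniformly on $\Omega$ for each fixed $t>0$. Since a uniform (indeed pointwise) limit of log-concave functions is log-concave---the defining inequality $\log u(c_\tau)\ge(1-\tau)\log u(c_0)+\tau\log u(c_1)$ along each minimal geodesic passes to the limit, with the convention $\log 0=-\infty$ handled by continuity/monotonicity of $\log$ on $[0,\infty)$, and likewise in the weak case along the selected geodesics---we conclude that $e^{t\Delta_\Omega}\phi$ is log-concave (resp.\ weakly log-concave) in $\Omega$ for every $t>0$. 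This proves the proposition.

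The main obstacle I anticipate is the bookkeeping at the truncation/normalization step: ensuring that $\phi_a$ can be taken in $\mathcal{A}_\Omega(a)\cap L^\infty(\Omega)$ so that the preservation hypothesis for $F$-concavity on $[0,a)$ literally applies, while still retaining uniform convergence to $\phi$; this is where Proposition~\ref{theorem:2.2}~(2) (scaling $\phi\mapsto\varepsilon\phi$ preserves $F$-concavity) and the precise form of Lemma~\ref{theorem:2.5} are used. The limiting argument and the Euclidean reduction are otherwise routine given the cited results.
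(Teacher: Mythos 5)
Your overall route---reduce to $\mathbb{R}^n$ via Proposition~\ref{theorem:3.1}, sandwich $F$-concavity between $H_a$-concavity and log-concavity by Proposition~\ref{theorem:2.2}~(1) and Lemma~\ref{theorem:2.3}, approximate a log-concave datum by hot-concave functions via Lemma~\ref{theorem:2.5}, apply the preservation hypothesis, and pass to the limit---is the same as the paper's. However, the step you set aside as ``bookkeeping'' is precisely the nontrivial point, and your proposed fix does not work. Lemma~\ref{theorem:2.5} produces approximants $f_b$ that are $H_b$-concave with $b\to\infty$, whereas the hypothesis concerns an $F$ admissible on the \emph{fixed} interval $[0,a)$; Proposition~\ref{theorem:2.2}~(1) compares $F$ only with $H_a$ for that same $a$, so it gives no relation between $H_b$-concavity ($b>a$) and $F$-concavity---indeed an $H_b$-concave function may take values $\ge a$, outside the domain of $F$, so your assertion that ``each $\phi_a$ lies in $\mathcal{C}_\Omega[F]$'' is unjustified as stated. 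Worse, your requirement that a truncation keep the approximants in $\mathcal{C}_\Omega[F]\cap\mathcal{A}_\Omega(a)$ \emph{and} retain uniform convergence to $\phi$ is impossible whenever $a<\infty$ and $\|\phi\|_{L^\infty(\Omega)}>a$, since every element of $\mathcal{A}_\Omega(a)$ is bounded by $a$. Proposition~\ref{theorem:2.2}~(2) does not bridge the gap either: it rescales functions that are \emph{already} $F$-concave, which is exactly what you still need to establish for the approximants.

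The missing idea is the scaling self-similarity of the hot-concavity family combined with the scale invariance of log-concavity. The paper first notes that preservation of $F$-concavity implies preservation of $F|_{[0,a')}$-concavity for $a'\in(0,a)$ (by the maximum principle), so one may assume $a<\infty$. Then, for $b>a$, it sets $\varepsilon_b:=ab^{-1}\in(0,1)$ and uses the identity $H_a(\varepsilon_b f_b)=H(a^{-1}\varepsilon_b f_b)=H(b^{-1}f_b)=H_b(f_b)$, so that $\varepsilon_b f_b$ is (weakly) $H_a$-concave, takes values in $[0,a)$, and is therefore (weakly) $F$-concave by Proposition~\ref{theorem:2.2}~(1) and Lemma~\ref{theorem:2.3}; only now does the hypothesis apply. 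By linearity, $e^{t\Delta_\Omega}(\varepsilon_b f_b)=\varepsilon_b\,e^{t\Delta_\Omega}f_b$ is (weakly) $F$-concave, hence (weakly) log-concave, and since a positive constant factor does not affect the log-concavity inequality, $e^{t\Delta_\Omega}f_b$ itself is (weakly) log-concave. One then lets $b\to\infty$ using the \emph{unscaled} convergence $f_b\to f$ (note $\varepsilon_b f_b\to0$, so passing to the limit before removing the factor $\varepsilon_b$ would lose the datum). Your final limiting step via the $L^\infty$-contraction of DHF is fine, and the minor point you and the paper both gloss over in the weak case (the selected geodesics may depend on $b$, requiring compactness of the family of minimal geodesics between two fixed points) is not the issue; the normalization step is.
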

\begin{proof}
Assume that some $F$-concavity (resp.\,weak $F$-concavity) is preserved by DHF in~$\Omega$.
Then 
$F|_{[0,a')}$-concavity (resp.\,weak $F|_{[0,a')}$-concavity)
is also preserved by DHF in~$\Omega$ for any $a'\in(0,a)$
hence we can assume  $a\in (0,\infty)$.
By Proposition~\ref{theorem:3.2} we find that $F$-concavity is preserved by DHF in~$\mathbb{R}^n$.
We observe  from Proposition~\ref{theorem:2.2}~\eqref{enumi:1} and Lemma~\ref{theorem:2.3}
that $F$-concavity is weaker than $H_{a}$-concavity and stronger than log-concavity in ${\mathcal A}_\Omega(a)$.

Let $f$ be a bounded log-concave (resp.\,weakly log-concave) function in $\Omega$.
By Lemma~\ref{theorem:2.5} we find a sequence $\{f_{b}\}_{b>0}$ such that 
$f_b$ is $H_b$-concave \textup{(}resp.\,weakly $H_{b}$-concave\textup{)}  in $\Omega$ and 
$\lim_{b\to\infty}f_b=f$ uniformly on $\Omega$.
For $b\in (a,\infty)$, set $\varepsilon_b\coloneqq ab^{-1}\in (0,1)$.
By definition we have
\[
H_a( \varepsilon_b f_b )=H(a^{-1} \varepsilon_b f_b )=H(b^{-1} f_b)=H_b(f_b)
\quad in\ \Omega.
\]
Thus  $\varepsilon_{b} f_{b}$ is $H_{a}$-concave (resp.\,weakly $H_a$-concave) in $\Omega$ hence also $F$-concave (resp.\,weakly $F$-concave).
Furthermore, the preservation of $F$-concavity (resp.\,weak $F$-concavity)
 by DHF in $\Omega$ implies that 
$e^{t\Delta_\Omega}\left(\varepsilon_{b} f_{b}\right)=\varepsilon_{b} e^{t\Delta_\Omega}f_{b}$ is $F$-concave (resp.\,weakly $F$-concave)
in $\Omega$ for every $t>0$.
We observe from the property $\mathcal{C}_{\mathbb{R}^n}[F] \subset  \mathcal{C}_{\mathbb{R}^n}[\Phi_0]$ with Lemma~\ref{theorem:2.3} 
that $\varepsilon_{b} e^{t\Delta_\Omega}f_{b}$ is log-concave (resp.\,weak log-concavity) in $\Omega$ for every $t>0$.
This implies that 
\[
\log\left(\left(e^{t\Delta_\Omega}f_{b}\right)(c_\tau )\right)
\geq
(1-\tau)\log\left(\left(e^{t\Delta_\Omega}f_{b}\right)(c_0 )\right)
+\tau \log\left(\left(e^{t\Delta_\Omega}f_{b}\right)(c_1)\right)
\]
for any (resp.\,some) minimal geodesic $c\colon [0,1]\to \Omega$ joining any two points in $\Omega$ and $\tau \in (0,1)$.
Letting $b\to\infty$, we have  
\[
\log\left(\left(e^{t\Delta_\Omega}f\right)(c_\tau)\right)
\geq
(1-\tau)\log\left(\left(e^{t\Delta_\Omega}f\right)(c_0)\right)
+\tau\log\left(\left(e^{t\Delta_\Omega}f\right)(c_1)\right),
\]
that is,  log-concavity (resp.\,weak log-concavity) is preserved by DHF in $\Omega$.
Thus the proposition follows.
\end{proof}
%
%
\section{Proof of Theorem~\ref{theorem:1.4}}\label{section:4}
In this section we discuss the non-preservation of  log-concavity and weak log-concavity by DHF in~$\Omega$,  
and complete the proof of Theorem~\ref{theorem:1.4}. 
We employ the arguments of~\cite{CW3}*{Lemma~3.1} with parabolic regularity theorems to prepare the following lemma.
\begin{lemma}
\label{theorem:4.1}
Fix $o\in \Omega$.
Let $\delta>0$ be such that $B_o^M(\delta)$ is strongly convex and  $B_o^M(\delta)\subset \Omega$.
Assume that there exist a bounded, smooth function $\psi$ on $B_o^M(\delta)$
and $v\in T_oM$ with the following three properties:
\begin{enumerate}
\renewcommand{\theenumi}{C\arabic{enumi}}
\renewcommand{\labelenumi}{{\rm(\theenumi)}}
\item\label{enumi:C1}
 $\psi$ is concave in $B_o^M(\delta)$;
 \item\label{enumi:C2}
 $\Hess_M \psi(v,v)=0$;
 \item\label{enumi:C3}
  $\Hess_M (\Delta_M \psi+ g(\nabla_M \psi,\nabla_M \psi))(v,v)>0$.
\end{enumerate}
Then $\phi\coloneqq e^{\psi}{\bf 1}_{B_o^M(\delta)}$ is bounded and log-concave in $\Omega$.
Furthermore, $e^{t\Delta_\Omega}\phi$ is not weakly log-concave in $\Omega$ for every small enough $t>0$.
\end{lemma}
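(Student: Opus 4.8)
The plan is to follow the perturbative scheme of \cite{CW3}*{Lemma~3.1}: with $u(\cdot,t)\coloneqq e^{t\Delta_\Omega}\phi$ and $w\coloneqq\log u$, I would show that along the geodesic issuing from $o$ in the direction $v$ the function $w(\cdot,t)$ becomes \emph{strictly convex} at $o$ for every small $t>0$, which is incompatible with (weak) log-concavity because $B_o^M(\delta)$ is strongly convex. First I would dispose of the easy assertion: $\phi$ is bounded since $\psi$ is, and $\log\phi=\psi$ on $B_o^M(\delta)$ while $\log\phi=-\infty$ elsewhere, so for a minimal geodesic $c\colon[0,1]\to\Omega$ the inequality \eqref{eq:1.1} with $F=\log$ is trivial unless $c_0,c_1\in B_o^M(\delta)$; by strong convexity $c$ then lies in $B_o^M(\delta)$, where $\psi$ is concave by \eqref{enumi:C1}, so $\phi$ is log-concave in $\Omega$.

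Next, by the strong maximum principle $u>0$ in $\Omega\times(0,\infty)$, so $w$ is smooth there and solves $\partial_t w=\Delta_M w+g(\nabla_M w,\nabla_M w)$. Let $\gamma(s)\coloneqq\exp_o(sv)$, which is defined and contained in $B_o^M(\delta)$ for $|s|$ small, and set
\[
q(t)\coloneqq\frac{d^2}{ds^2}w(\gamma(s),t)\Big|_{s=0}=\Hess_M w(\cdot,t)(v,v)(o).
\]
The crux is to prove $q(t)>0$ for all small $t>0$. Granting this, fix such a $t$; then $s\mapsto w(\gamma(s),t)$ is strictly convex at $0$, so $w(\gamma(0),t)<\tfrac12 w(\gamma(-\varepsilon),t)+\tfrac12 w(\gamma(\varepsilon),t)$ for all small $\varepsilon>0$, while $\gamma|_{[-\varepsilon,\varepsilon]}$, reparametrized on $[0,1]$, is the \emph{unique} minimal geodesic joining $\gamma(-\varepsilon)$ and $\gamma(\varepsilon)$ by the strong convexity of $B_o^M(\delta)$; hence $u(\cdot,t)$ violates \eqref{eq:1.1} with $F=\log$ along this geodesic, i.e.\ $e^{t\Delta_\Omega}\phi$ is not weakly log-concave in $\Omega$, for every small $t>0$.

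To establish $q(t)>0$ I would Taylor-expand $w$ in $t$ at $t=0$. Set $G\coloneqq\Delta_M\psi+g(\nabla_M\psi,\nabla_M\psi)$, so that $\Hess_M G(v,v)(o)>0$ by \eqref{enumi:C3}. Formally $w(\cdot,0)=\psi$ and $\partial_t w|_{t=0}=G$, which would give $q(t)=\Hess_M\psi(v,v)(o)+t\,\Hess_M G(v,v)(o)+O(t^2)$; then \eqref{enumi:C2} annihilates the constant term and the linear coefficient is positive, so $q(t)>0$ for small $t>0$. To make this rigorous despite $\phi$ being merely $L^\infty$, I would fix $\tilde\phi\in C_c^\infty(B_o^M(\delta))$ with $\tilde\phi=e^\psi$ on $B_o^M(\delta/2)$ and compare $u$ with $\tilde u\coloneqq e^{t\Delta_\Omega}\tilde\phi$. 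Since $\tilde\phi$ is smooth and compactly supported inside $\Omega$, $\tilde u$ is smooth up to $t=0$ on $\overline\Omega$, positive near $o$ for small $t$, and $\tilde w\coloneqq\log\tilde u$ genuinely satisfies $\tilde w(\cdot,t)=\psi+tG+O(t^2)$ in $C^2$ near $o$. Moreover $u-\tilde u=e^{t\Delta_\Omega}(\phi-\tilde\phi)$ has bounded initial datum vanishing on $B_o^M(\delta/2)$, at positive distance from $o$; hence, by interior parabolic regularity together with Gaussian bounds for the Dirichlet heat kernel, $u-\tilde u$ and its spatial derivatives up to order two tend to $0$ near $o$ faster than any power of $t$, and since $\tilde u$ is bounded below near $o$ the same holds for $w-\tilde w=\log\bigl(1+(u-\tilde u)/\tilde u\bigr)$. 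Combining, $w(\cdot,t)=\psi+tG+O(t^2)$ in $C^2$ near $o$, which yields the desired expansion of $q$.

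I expect the last step to be the main obstacle: passing from the discontinuous datum $\phi$ to the smooth approximation $\tilde\phi$ and controlling the remainder in $C^2$ up to $t=0$, so that $\Hess_M w(\cdot,t)(v,v)(o)$ may legitimately be differentiated in $t$ at $t=0$ with derivative $\Hess_M G(v,v)(o)$. This hinges on the $O(t^{\infty})$-smallness, together with derivatives, of $e^{t\Delta_\Omega}(\phi-\tilde\phi)$ near $o$, which in turn relies on parabolic regularity theorems and heat-kernel estimates away from $\partial\Omega$.
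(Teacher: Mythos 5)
Your proposal is correct and follows essentially the same route as the paper: establish log-concavity of $\phi$ from \eqref{enumi:C1} via strong convexity, then show $\Hess_M(\log e^{t\Delta_\Omega}\phi)(v,v)$ at $o$ vanishes at $t=0$ by \eqref{enumi:C2} and has positive time derivative equal to $\Hess_M(\phi^{-1}\Delta_M\phi)(v,v)$ by \eqref{enumi:C3}, so it is positive for small $t>0$, contradicting weak log-concavity along the unique minimal geodesic through $o$ in direction $v$. The only difference is the regularity justification at $t=0$: the paper invokes parabolic regularity (\cite{LSU}*{Chapter~IV, Theorem~10.1}) to get $C^{4,\sigma;2,\sigma/2}$-smoothness of $e^{t\Delta_\Omega}\phi$ on $B_o^M(\delta)\times[0,\infty)$ directly, whereas you reach the same conclusion by splitting off a smooth compactly supported part of the datum and estimating the remainder via heat-kernel decay and interior estimates.
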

\begin{proof}
It follows from \eqref{enumi:C1} that $\phi$ is bounded and log-concave in $\Omega$.
Notice that 
\[
\Delta_M \psi+ g(\nabla_M \psi,\nabla_M \psi)=\phi^{-1}\Delta_M \phi\quad \text{in }B_o^M(\delta).
\]
Applying parabolic regularity theorems (see \cite{LSU}*{Chapters~III and IV}, in particular, \cite{LSU}*{Chapter~IV, Theorem~10.1}),
we see that $e^{t\Delta_\Omega}\phi$ is $C^{4,\sigma;,2,\sigma/2}$-smooth in $B_\delta(o)\times[0,\infty)$ for some $\sigma\in(0,1)$.
Then it follows from \eqref{enumi:C3} that 
\begin{align*}
\frac{d}{d t} \Hess_M \left(\log e^{t \Delta_{\Omega}}\phi\right)(v,v)\bigg|_{t=0}
&=
\Hess_M \left(\frac{\partial}{\partial t} \log e^{t \Delta_{\Omega}} \phi \right)(v,v)\bigg|_{t=0}\\
&=\Hess_M \left(  \left( e^{t \Delta_{\Omega}}\phi\right)^{-1}  \frac{\partial}{\partial t} e^{t \Delta_{\Omega}}\phi \right)(v,v)\bigg|_{t=0}\\
&=\Hess_M \left( \left( e^{t \Delta_{\Omega}}\phi\right)^{-1} \Delta_M e^{t \Delta_{\Omega}}\phi \right)(v,v)\bigg|_{t=0}\\
& =\Hess_M\left(\phi^{-1} \Delta_M \phi\right)(v,v)>0.
\end{align*}
This together with  \eqref{enumi:C2} implies that
\[
\Hess_M \left( \log e^{t \Delta_{\Omega}}\phi \right)(v,v)>0
\]
for every small enough $t>0$ hence  $e^{t\Delta_\Omega}\phi$ is not weakly log-concave in $\Omega$ for every small enough $t>0$.
This completes the proof.
\end{proof}

Now we are ready to complete the proof of Theorem~\ref{theorem:1.4}.
\begin{proof}[Proof of Theorem~{\rm\ref{theorem:1.4}}]
Assume that the sectional curvature does not vanish at  $o\in \Omega$.
Fix $r>0$ such that $B_o^M(r)$ is strongly convex and $B_o^M(r)\subset \Omega$.
Let $\xi$ be a normal coordinate map on $B_o^M(r)$ determined by the orthonormal basis $e_1,\ldots, e_n$ for $T_oM$
such that the sectional curvature of some tangent plane containing  $e_1$ is not zero.
By the symmetry of $[R_{1i1j}(o)]_{2\leq i,j \leq n}$
we can assume that $e_2,\ldots, e_n$ are eigenvectors of $[R_{1i1j}(o)]_{2\leq i,j \leq n}$ 
hence
\begin{equation*}
R_{1i1j}(o)=-\kappa_i\delta_{ij}\quad\text{for }i,j=1,\ldots,n,
\end{equation*}
where $\kappa_1=0$ and $\kappa_i$ is the sectional curvature of the tangent plane spanned by $e_1$ and $e_i$ for $i=2,\ldots,n$.
Set 
\[
I_+:=\{i=2,\ldots, n\mid \kappa_i>0\}.
\]
By the assumption we find $I_+\neq \emptyset$.
Let $C>0$ satisfy 
\begin{align*}
Cg(v,v)+\frac{1}2 \sum_{k\in I_+}  \Hess_M \Gamma_{11}^k(v,v)&\geq 0 \quad \text{for  }v\in T_oM, \\
(n-1)\cdot \biggr( \kappa_i \mathbf{1}_{I_+}(i)+ n^{\frac{3}{2}} \max_{1\leq  k,\ell  \leq n} |R_{1\ell k i}(o)|    \biggr)^2 &\leq C \quad \text{for }i=2,\ldots, n.
\end{align*}

For  $\lambda>0$ to be specified later, define a bounded, smooth function $\psi$ on $B_o^M(r)$ by
\begin{align*}
\psi(p)\coloneqq &\,\, 2\lambda \sum_{i\in I_+}^n  \xi^i(p) -\lambda^2 \sum_{i=2}^n \xi^i(p)^2+\frac{2}{3}\lambda\xi^1(p)^2\sum_{i\in I_+}^n \kappa_i \xi^i(p)\\
& -\xi^1(p)^2\biggr[C(1 +\lambda)|\xi(p)|^2+\lambda^2 \sum_{i\in I_+}^n \kappa_i\xi^i(p)^2\biggr]
\quad
\text{for }p\in B_o^M(r).
\end{align*}
For $i=2,\ldots, n$, we calculate that 
\begin{align*}
 & \frac{\partial \psi}{\partial \xi^i}(o)=2\lambda \mathbf{1}_{I_+}(i), \quad
\frac{\partial^2 \psi}{\partial (\xi^i )^2}(o)=-2\lambda^2,\quad
\frac{\partial^3 \psi}{\partial (\xi^1)^2 \partial \xi^i}(o)=\frac{4}{3}\lambda \kappa_i\mathbf{1}_{I_+}(i),\\
 & \frac{\partial^4 \psi}{\partial (\xi^1)^2 \partial (\xi^i)^2}(o)=-4\left[C(1 +\lambda)+\lambda^2 \kappa_i\mathbf{1}_{I_+}(i)\right],\quad
\frac{\partial^4 \psi}{\partial (\xi^1)^4}(o)=-24 C(1 +\lambda),
\end{align*}
and all the other partial derivatives of $\psi$ at $o$ are zero.
In particular, we have
\[
\left[ \Hess_M \psi(e_i, e_j) \right]_{1\leq i,j \leq n} =
\left[ \frac{\partial^2 \psi}{\partial \xi^i \partial \xi^j}(o) \right]_{1\leq i,j \leq n}
=\diag[0, -2\lambda^2,\ldots,-2\lambda^2],
\]
which implies that $\psi$ satisfies \eqref{enumi:C2} for $v=e_1$.

We show that there exists $\delta \in (0,r)$ such that $\psi$ satisfies~\eqref{enumi:C1}, that is, 
\begin{align*}
H(\tau)=[h_{ij}(\tau)]_{1\leq i,j \leq n}
&\coloneqq 
\left[ \Hess_M \psi \left(\frac{\partial }{\partial \xi^i}, \frac{\partial }{\partial \xi^j}\right)(\exp_o(\tau v)) \right]_{1\leq i,j \leq n}\\
&=\left[ \frac{\partial^2 \psi}{\partial \xi^i\partial \xi^j}(\exp_o(\tau v)) 
-\sum_{k=1}^n\Gamma_{ij}^k(\exp_o(\tau v))  \frac{\partial \psi}{\partial \xi^k}(\exp_o(\tau v)) \right]_{1\leq i,j \leq n}
\end{align*}
is nonpositive definite for any unit tangent vector $v$ at $o$ and $\tau \in [0,\delta)$.
Thanks to 
\[
H(0)=\diag[0, -2\lambda^2,\ldots,-2\lambda^2], 
\]
it is enough to show 
\begin{align}
\label{eq:4.1}
  (-1)^{n-1} \frac{d}{d\tau} \det H(\tau)\bigg|_{\tau=0}&=0,\\
\label{eq:4.2}
  (-1)^{n-1} \frac{d^2}{d\tau^2} \det H(\tau)\bigg|_{\tau=0}&<0.
 \end{align}
We denote by $\widetilde{H}(\tau)=[\widetilde{h}_{ij}(\tau)]_{1\leq i,j \leq n}$ the adjugate matrix of $H(\tau)$.
Then we have
\[
\widetilde{H}(0)=\diag[(-2\lambda^2)^{n-1}, 0,\ldots,0]
\]
and deduce from Jacobi's formula that
\begin{align*}
\frac{d}{d\tau} \det H(\tau)\bigg|_{\tau=0}
&=\tr  \left(\widetilde{H}(\tau) H'(\tau)\right)\bigg|_{\tau=0}
=(-2\lambda^2)^{n-1} h_{11}'(0),\\
\frac{d^2}{d\tau^2} \det H(\tau)\bigg|_{\tau=0}&=\tr \left( \widetilde{H}(0)H''(0) \right)+ \tr \left(\widetilde{H}'(0) H'(0) \right)\\
& =(-2\lambda^2)^{n-1} h_{11}''(0)+ \sum_{i,j=1}^n \widetilde{h}_{ij}'(0){h}'_{ji}(0).
\end{align*}
By the Taylor expressions of the Christoffel symbols  (see~\eqref{eq:2.1}) we compute
\[
 \Gamma_{11}^k(o)=0, \quad \frac{\partial \Gamma_{11}^k}{\partial \xi^\ell}(o)
=\frac13\sum_{\ell=1}^n \left(R_{1 \ell k 1 }(o) +R_{1 \ell k 1 }(o) \right)  
=\frac23 \kappa_k \delta_{k\ell} \quad \text{for }k,\ell=1,\ldots,n
\]
hence 
\[
h_{11}'(0)
=\sum_{\ell=1}^n\frac{\partial}{\partial \xi^\ell}\left( \frac{\partial^2 \psi}{\partial (\xi^1)^2} -\sum_{k=1}^n\Gamma_{11}^k  \frac{\partial \psi}{\partial \xi^k}\right)\bigg|_o v^\ell
=\frac43\lambda\sum_{\ell\in I_+} \kappa_\ell v^\ell-\frac43\lambda\sum_{k\in I_+}  \kappa_k  v^k=0,
\]
which shows \eqref{eq:4.1}.
Let us next prove \eqref{eq:4.2}.
A direct computation with the definition of $C>0$ gives
\begin{align}
\begin{split}
\label{eq:4.3}
 h''_{11}(0)
=&\sum_{\ell, m=1}^n\frac{\partial^2}{\partial \xi^\ell\partial \xi^m}\left( \frac{\partial^2 \psi}{\partial (\xi^1)^2} -\sum_{k=1}^n\Gamma_{11}^k  \frac{\partial \psi}{\partial \xi^k}\right)\bigg|_o v^\ell v^m\\
=&
\sum_{\ell,m=1}^n \frac{\partial^4 \psi}{\partial (\xi^1)^2 \partial \xi^\ell \partial \xi^{m}}(o)v^\ell v^{m}\\
&-2\sum_{k,\ell,m=1}^n \frac{\partial \Gamma_{11}^k}{\partial \xi^\ell}(o) \frac{\partial^2 \psi}{\partial \xi^k \partial\xi^m}(o) v^\ell v^m
-\sum_{k=1}^n\Hess_M \Gamma_{11}^k (v,v)\frac{\partial \psi}{\partial \xi^k}(o)\\
=&-4\left[ 5C(1 +\lambda)(v^1)^2+C(1 +\lambda)g(v,v) +\lambda^2 \sum_{\ell\in I_+}^n  \kappa_\ell (v^\ell)^2\right]\\
&
+\frac{8}{3}\lambda^2\sum_{k=2}^n \kappa_k (v^k)^2
 -2\lambda \sum_{k\in I_+}\Hess_M \Gamma_{11}^k(v,v)\\
=&-4C\left[ 5(1+\lambda)(v^1)^2+g(v,v) \right]\\
&+\frac43\lambda^2 \sum_{i=2}^n \kappa_i \left(2-3\mathbf{1}_{I_+}(i)\right)(v^i)^2
-4\lambda \left[ C g(v,v) +\frac{1}2 \sum_{k\in I_+}  \Hess_M \Gamma_{11}^k(v,v)\right]\\
\leq &-4C\left[ 5(1 +\lambda)(v^1)^2+g(v,v) \right]\\
\leq &-4C,
\end{split}
\end{align}
where the inequality follows from the property $\kappa_i (2-3\mathbf{1}_{I_+}(i))\leq 0$ for $i=2,\ldots,n$.
To compute $\tr (\widetilde{H}'(0)H'(0))$,
we observe from \eqref{eq:4.1} that
\[
H(0)\widetilde{H}'(0)+H'(0)\widetilde{H}(0)=
\frac{d}{d\tau}\left(H(\tau)\widetilde{H}(\tau)\right)\bigg|_{\tau=0}=
\frac{d}{d\tau}(\det H(\tau) E_n)\bigg|_{\tau=0}=O_n.
\]
By the properties $H(0)=\diag[0,  -2\lambda^2,\ldots, -2\lambda^2]$ and $\widetilde{H}(0)=\diag[(-2\lambda^2)^{n-1}, 0,\ldots,0]$
this implies that $\widetilde{h}'_{ij}(0)\neq 0$ happens only for either $i=1$ or $j=1$ 
hence
\begin{align*}
\sum_{i,j=1}^n \widetilde{h}_{ij}'(0){h}'_{ji}(0)
=\sum_{j=2}^n \widetilde{h}_{1j}'(0){h}'_{j1}(0)
+\sum_{i=2}^n \widetilde{h}_{i1}'(0){h}'_{1i}(0)
 =2\sum_{i=2}^n \widetilde{h}_{i1}'(0){h}'_{1i}(0),
\end{align*}
where we used $h'_{11}(0)=0$ in the first equality and the symmetry of $H(\tau)$ and  $\widetilde{H}(\tau)$ in the last equality.
Let $i=2,\ldots, n$. 
We observe from the cofactor expansion and  the property $H(0)=\diag[0,  -2\lambda^2,\ldots, -2\lambda^2]$ that 
\begin{equation*}
\widetilde{h}'_{i1}(0)
=(-1)^{1+i}\cdot (-2\lambda^2)^{n-2} \cdot (-1)^{i-2}\cdot{h}_{1i}'(0)
=(-1)^{n-1}\cdot (2\lambda^2)^{n-2} \cdot {h}_{1i}'(0).
\end{equation*}
We also find 
\begin{align*}
h_{1i}'(0)
&=\sum_{\ell=1}^n\frac{\partial}{\partial \xi^\ell}\left( \frac{\partial^2 \psi}{\partial \xi^1 \partial \xi^i} -\sum_{k=1}^n\Gamma_{1i}^k  \frac{\partial \psi}{\partial \xi^k}\right)\bigg|_o v^\ell\\
&=\frac43\lambda \kappa_i \mathbf{1}_{I_+}(i) v^1
-
\frac23\lambda \sum_{k,\ell=1}^n (R_{1\ell ki}(o)+R_{i\ell k 1} (o))\mathbf{1}_{I_+}(k)v^{\ell},
\end{align*}
which yields 
\begin{align*}
\left| h_{1i}'(0) \right| \leq  \frac43\lambda\left( \kappa_i \mathbf{1}_{I_+}(i)+ n^{\frac{3}{2}}\max_{1\leq  k,\ell  \leq n} |R_{1\ell k i}(o)|    \right)  |v| \leq \frac43\lambda \sqrt{\frac{C}{n-1}}.
\end{align*}
Combining this with  \eqref{eq:4.3}, we obtain
\begin{align*}
(-1)^{n-1}\frac{d^2}{d\tau^2} \det H(\tau)\bigg|_{\tau=0}
& =(2\lambda^2)^{n-1} h_{11}''(0)+ (2\lambda^2)^{n-2} \cdot \lambda^{-2}\sum_{i=2}^n h_{1i}'(0)^2\\
&\leq (2\lambda^2)^{n-1} \cdot\left(
-4C+\frac{16}{9} C \right)<0,
\end{align*}
which leads to \eqref{eq:4.2}.
Thus $\psi$ satisfies \eqref{enumi:C1} for every small enough $\delta>0$.

Let us verify \eqref{enumi:C3} for $v=e_1$, and complete the proof of Theorem~\ref{theorem:1.4}.
We compute 
\begin{align*}
&\mathrm{Hess}_M (\Delta_M \psi+ g(\nabla_M \psi,\nabla_M \psi)) (e_1, e_1)\\
& =\frac{\partial^2 }{\partial (\xi^1)^2}
\left[\, \sum_{i,j=1}^n g^{ij} 
\biggr(\frac{\partial^2 \psi}{\partial\xi^i\partial \xi^j} + \frac{\partial \psi}{\partial \xi^i} \frac{\partial \psi}{\partial \xi^j}-\sum_{k=1}^n \Gamma_{ij}^k \frac{\partial \psi}{\partial \xi^k}  \biggr) \right]\bigg|_o\\
& =\sum_{i,j=1}^n \frac{\partial^2 g^{ij}}{\partial (\xi^1)^2}(o)
\left(
\frac{\partial^2 \psi}{\partial\xi^i\partial \xi^j}(o)
+
\frac{\partial \psi}{\partial \xi^i}(o) \frac{\partial \psi}{\partial \xi^j}(o)
\right)\\
& \quad
+\sum_{i=1}^n \frac{\partial^2 }{\partial (\xi^1)^2}
 \biggr[
\frac{\partial^2 \psi}{\partial(\xi^i)^2}
+
\left(\frac{\partial \psi}{\partial \xi^i}\right)^2
-\sum_{k=1}^n \Gamma_{ii}^k \frac{\partial \psi}{\partial \xi^k}
\biggr]\bigg|_o\\
& =\frac{2}{3}\sum_{i=1}^n\kappa_i (-2\lambda^2+4\lambda^2 \mathbf{1}_{I_+}(i))
\\
& \quad
-4\biggr[C(n+5)(1 +\lambda)+\lambda^2 \sum_{i\in I_+}\kappa_i\biggr]
+\frac{16}{3}\lambda^2 \sum_{i\in I_+} \kappa_i
-2\lambda \sum_{k\in I_+}\sum_{i=1}^n \Hess_M\Gamma_{ii}^k (e_1,e_1)\\
& =\frac{4}{3}\lambda^2\sum_{i=1}^n\kappa_i (-1+3 \mathbf{1}_{I_+}(i))
-4\lambda\biggr[C(n+5)+\frac12 \sum_{k\in I_+}\sum_{i=1}^n \Hess_M\Gamma_{ii}^k (e_1,e_1)\biggr]
-4C(n+5),
\end{align*}
where we used the properties (see~\eqref{eq:2.1})
\[
g^{ij}(o)=\delta^{ij}, \quad
\frac{\partial g^{ij}}{\partial \xi^1}(o)=0,
\quad
\frac{\partial g^{ij}}{\partial (\xi^1)^2}(o)=-\frac23 R_{i 1 j 1}=\frac23\kappa_{i}\delta_{ij},
\quad \text{for }i,j=1,\ldots,n.
\]
Since  $\kappa_i(-1+3\mathbf{1}_+(i))\geq 0$ with equality if and only if $\kappa_i=0$,
we observe from the assumption $I_+\neq \emptyset$ that
$\mathrm{Hess}_M (\Delta_M \psi+ g(\nabla_M \psi,\nabla_M \psi)) (e_1, e_1)$ is a quadratic function of~$\lambda $
and there exists $\lambda>0$ such that 
\[
\mathrm{Hess}_M (\Delta_M \psi+ g(\nabla_M \psi,\nabla_M \psi)) (e_1, e_1)>0,
\]
that is, $\psi$ satisfies \eqref{enumi:C3} for $v=e_1$. 
Thus $\psi$ satisfies \eqref{enumi:C1}--\eqref{enumi:C3} for $v=e_1$, 
consequently, log-concavity is not preserved by DHF in $\Omega$ by Lemma~\ref{theorem:4.1}.
Then the theorem follows from Proposition~\ref{theorem:3.5}.
\end{proof}

Similarly, we obtain the following result on a weak $\overline{F}$-concavity version of Theorem~\ref{theorem:1.4}.
\begin{theorem}
\label{theorem:4.2}
If some weak $\overline{F}$-concavity is preserved by DHF in $\Omega$,
then the sectional curvature is identically zero on $\Omega$. 
\end{theorem}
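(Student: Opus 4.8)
The plan is to mirror exactly the proof of Theorem~\ref{theorem:1.4}, replacing each use of $\overline{F}$-concavity by its weak counterpart, since every intermediate result in Sections~\ref{section:3} and~\ref{section:4} was already formulated to include the ``weak'' version. So assume that some weak $\overline{F}$-concavity is preserved by DHF in $\Omega$. The first step is to reduce to a statement on $\mathbb{R}^n$: by Proposition~\ref{theorem:3.1}, if weak $\overline{F}$-concavity is preserved by DHF in $\Omega$, then $\overline{F}$-concavity (the ordinary, non-weak notion) is preserved by DHF in $\mathbb{R}^n$. In particular $\overline{F}$-concavity is not one of the degenerate cases: by Lemma~\ref{theorem:2.4} and Proposition~\ref{theorem:2.2}~\eqref{enumi:1} the admissible function $F$ (if $\overline{F}$-concavity corresponds to an admissible $F$) satisfies $\lim_{r\to +0}F(r)=-\infty$, and $F$-concavity is weaker than hot-concavity and stronger than log-concavity in $\mathcal{A}_{\mathbb{R}^n}(a)$; the cases of quasi-concavity and $\infty$-concavity are handled directly by Lemmas~\ref{theorem:3.3} and~\ref{theorem:3.4}, which already produce non-preservation of the weak versions in $\Omega$.

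The second step is to upgrade to log-concavity. Proposition~\ref{theorem:3.5} is stated with the ``resp.\ weak'' parenthetical precisely for this purpose: since weak $F$-concavity is preserved by DHF in $\Omega$, weak log-concavity is preserved by DHF in $\Omega$. (If the preserved weak $\overline{F}$-concavity is weak $\infty$-concavity, then by the hierarchy it would in particular imply preservation of, say, weak $0$-concavity, but more cleanly Lemma~\ref{theorem:3.4} already rules out weak $\infty$-concavity; and weak quasi-concavity is excluded by Lemma~\ref{theorem:3.3} once we know $F$-concavity is strictly weaker than log-concavity — otherwise $F$-concavity sits between log- and hot-concavity and we are in the main case.) So in all cases we arrive at: weak log-concavity is preserved by DHF in $\Omega$.

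The third and final step is to contradict this when the sectional curvature does not vanish somewhere. Here I would invoke verbatim the curvature computation carried out in the proof of Theorem~\ref{theorem:1.4}: assuming the sectional curvature does not vanish at some $o\in\Omega$, one constructs the explicit bounded smooth function $\psi$ on a strongly convex ball $B_o^M(\delta)$ satisfying properties \eqref{enumi:C1}--\eqref{enumi:C3} of Lemma~\ref{theorem:4.1} with $v=e_1$. Lemma~\ref{theorem:4.1} then asserts that $\phi=e^\psi\mathbf{1}_{B_o^M(\delta)}$ is bounded and log-concave in $\Omega$ while $e^{t\Delta_\Omega}\phi$ is \emph{not weakly} log-concave in $\Omega$ for every small enough $t>0$ — note the conclusion of Lemma~\ref{theorem:4.1} is already phrased in terms of weak log-concavity. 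This directly contradicts the preservation of weak log-concavity established in the second step. Hence the sectional curvature must vanish identically on $\Omega$, which is the assertion of Theorem~\ref{theorem:4.2}.

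Since every lemma and proposition we need has been stated with the weak variant built in, there is no genuinely new obstacle here; the only point requiring a little care is the bookkeeping in Step~1--2 to make sure that the three alternatives for $\overline{F}$-concavity — an admissible $F$ strictly weaker than log-concavity (handled by Lemma~\ref{theorem:3.3}), an admissible $F$ between log- and hot-concavity (handled by Proposition~\ref{theorem:3.5} plus Step~3), and $\infty$-concavity (handled by Lemma~\ref{theorem:3.4}) — together with the observation that $\lim_{r\to+0}F(r)=-\infty$ is forced whenever $F$ is admissible and $F$-concavity is preserved, exhaust all possibilities. With that case analysis in place the proof of Theorem~\ref{theorem:4.2} is complete.
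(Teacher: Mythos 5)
Your proposal is correct and follows essentially the same route as the paper, which proves Theorem~\ref{theorem:4.2} simply by rerunning the proof of Theorem~\ref{theorem:1.4} with the ``resp.\,weak'' variants of Proposition~\ref{theorem:3.5} (and Lemmas~\ref{theorem:3.3}, \ref{theorem:3.4} for the quasi- and $\infty$-concave cases), noting that the conclusion of Lemma~\ref{theorem:4.1} is already stated for weak log-concavity. The curvature construction and the final contradiction you describe are exactly those of the paper.
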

\noindent
{\bf Acknowledgements.} 
K.I. and A.T. were supported in part by JSPS KAKENHI Grant Number 19H05599.
A.T. was supported in part by JSPS KAKENHI Grant Number 19K03494.
\medskip

\noindent
{\bf  Conflict of Interest.}
The authors state no conflict of interest. 
\begin{bibdiv}
\begin{biblist}
\bib{AI}{article}{
   author={Andreucci, Daniele},
   author={Ishige, Kazuhiro},
   title={Local quasi-concavity of the solutions of the heat equation with a
   nonnegative potential},
   journal={Ann. Mat. Pura Appl.},
   volume={192},
   date={2013},
   pages={329--348},
}
\bib{BV}{article}{
   author={B\'{e}nilan, Philippe},
   author={V\'{a}zquez, Juan Luis},
   title={Concavity of solutions of the porous medium equation},
   journal={Trans. Amer. Math. Soc.},
   volume={299},
   date={1987},
   pages={81--93},
}
\bib{BL}{article}{
 author={Brascamp, Herm Jan},
 author={Lieb, Elliott H.},
 title={On extensions of the Brunn-Minkowski and Pr\'{e}kopa-Leindler
 theorems, including inequalities for log concave functions, and with an
 application to the diffusion equation},
 journal={J. Functional Analysis},
 volume={22},
 date={1976},
 pages={366--389},
}
\bib{BBI}{book}{
   author={Burago, Dmitri},
   author={Burago, Yuri},
   author={Ivanov, Sergei},
   title={A course in metric geometry},
   series={Graduate Studies in Mathematics},
   volume={33},
   publisher={American Mathematical Society, Providence, RI},
   date={2001},
   pages={xiv+415},
  }
\bib{CW1}{article}{
  author={Chau, Albert},
  author={Weinkove, Ben},
  title={Counterexamples to quasiconcavity for the heat equation},
  journal={Int. Math. Res. Not. IMRN},
  date={2020},
  pages={8564--8579},
}
\bib{CW2}{article}{
  author={Chau, Albert},
  author={Weinkove, Ben},
  title={The Stefan problem and concavity},
  journal={Calc. Var. Partial Differential Equations},
  volume={60},
  date={2021},
  pages={Paper No.176, 13},
}
\bib{CW5}{article}{
   author={Chau, Albert},
   author={Weinkove, Ben},
   title={Instantaneous convexity breaking for the quasi-static droplet
   model},
   journal={Interfaces Free Bound.},
   volume={25},
   date={2023},
   pages={517--523},
}
\bib{CW3}{article}{
   author={Chau, Albert},
   author={Weinkove, Ben},
   title={Non-preservation of {\it $\alpha $}-concavity for the porous
   medium equation},
   journal={Adv. Math.},
   volume={440},
   date={2024},
   pages={Paper No. 109520},
}
\bib{DH1}{article}{
   author={Daskalopoulos, P.},
   author={Hamilton, R.},
   title={The free boundary for the $n$-dimensional porous medium equation},
   journal={Internat. Math. Res. Notices},
   date={1997},
   pages={817--831},
}
\bib{DH2}{article}{
   author={Daskalopoulos, P.},
   author={Hamilton, R.},
   title={Regularity of the free boundary for the porous medium equation},
   journal={J. Amer. Math. Soc.},
   volume={11},
   date={1998},
   pages={899--965},
}
\bib{DHL}{article}{
   author={Daskalopoulos, P.},
   author={Hamilton, R.},
   author={Lee, K.},
   title={All time $C^\infty$-regularity of the interface in degenerate
   diffusion: a geometric approach},
   journal={Duke Math. J.},
   volume={108},
   date={2001},
   pages={295--327},
}
\bib{GK}{article}{
  author={Greco, Antonio},
  author={Kawohl, Bernd},
  title={Log-concavity in some parabolic problems},
  journal={Electron. J. Differential Equations},
  date={1999},
  pages={No.19, 12},
}
\bib{INS}{article}{
  author={Ishige, Kazuhiro},
  author={Nakagawa, Kazushige},
  author={Salani, Paolo},
  title={Spatial concavity of solutions to parabolic systems},
  journal={Ann. Sc. Norm. Super. Pisa Cl. Sci.},
  volume={20},
  date={2020},
  pages={291--313},
}
\bib{IS01}{article}{
 author={Ishige, Kazuhiro},
 author={Salani, Paolo},
 title={Is quasi-concavity preserved by heat flow?},
 journal={Arch. Math. (Basel)},
 volume={90},
 date={2008},
 pages={450--460},
}
\bib{IS02}{article}{
 author={Ishige, Kazuhiro},
 author={Salani, Paolo},
 title={Convexity breaking of the free boundary for porous medium
 equations},
 journal={Interfaces Free Bound.},
 volume={12},
 date={2010},
 pages={75--84},
}
\bib{IST02}{article}{
  author={Ishige, Kazuhiro},
  author={Salani, Paolo},
  author={Takatsu, Asuka},
  title={Power concavity for elliptic and parabolic boundary value problems
  on rotationally symmetric domains},
  journal={Commun. Contemp. Math.},
  volume={24},
  date={2022},
  pages={Paper No.2150097, 29},
}
\bib{IST05}{article}{
 author={Ishige, Kazuhiro},
 author={Salani, Paolo},
 author={Takatsu, Asuka},
 title={Characterization of $F$-concavity preserved by the Dirichlet heat flow},
 journal={to appear in Trans. Amer. Math. Soc.},
}
\bib{KLM}{article}{
   author={Kagaya, Takashi},
   author={Liu, Qing},
   author={Mitake, Hiroyoshi},
   title={Quasiconvexity preserving property for fully nonlinear nonlocal
   parabolic equations},
   journal={NoDEA Nonlinear Differential Equations Appl.},
   volume={30},
   date={2023},
   pages={Paper No. 13, 28},
}
\bib{Kawohl01}{article}{
   author={Kawohl, Bernhard},
   title={When are solutions to nonlinear elliptic boundary value problems
   convex?},
   journal={Comm. Partial Differential Equations},
   volume={10},
   date={1985},
   pages={1213--1225},
}
\bib{Kawohl}{book}{
 author={Kawohl, Bernhard},
 title={Rearrangements and convexity of level sets in PDE},
 series={Lecture Notes in Mathematics},
 volume={1150},
 publisher={Springer-Verlag, Berlin},
 date={1985},
 pages={iv+136},
}
\bib{Kawohl02}{article}{
   author={Kawohl, Bernhard},
   title={A remark on N. Korevaar's concavity maximum principle and on the
   asymptotic uniqueness of solutions to the plasma problem},
   journal={Math. Methods Appl. Sci.},
   volume={8},
   date={1986},
   pages={93--101},
}
\bib{Keady}{article}{
  author={Keady, G.},
  title={The persistence of logconcavity for positive solutions of the
  one-dimensional heat equation},
  journal={J. Austral. Math. Soc. Ser. A},
  volume={48},
  date={1990},
  pages={246--263},
}
\bib{Ken02}{article}{
  author={Kennington, Alan U.},
  title={Convexity of level curves for an initial value problem},
  journal={J. Math. Anal. Appl.},
  volume={133},
  date={1988},
  pages={324--330},
}
\bib{Kol}{article}{
  author={Kolesnikov, Alexander V.},
  title={On diffusion semigroups preserving the log-concavity},
  journal={J. Funct. Anal.},
  volume={186},
  date={2001},
  pages={196--205},
}
\bib{Kor}{article}{
 author={Korevaar, Nicholas J.},
 title={Convex solutions to nonlinear elliptic and parabolic boundary
 value problems},
 journal={Indiana Univ. Math. J.},
 volume={32},
 date={1983},
 pages={603--614},
}
\bib{LSU}{book}{
  author={Lady\v{z}enskaja, O.A.},
  author={Solonnikov, V.A.},
  author={Ural\cprime ceva, N.N.},
  title={Linear and quasilinear equations of parabolic type},
  series={Translations of Mathematical Monographs, Vol.23},
  publisher={American Mathematical Society, Providence, R.I.},
  date={1968},
  pages={xi+648},
}
\bib{L}{article}{
  author={Lee, Ki-Ahm},
  title={Power concavity on nonlinear parabolic flows},
  journal={Comm. Pure Appl. Math.},
  volume={58},
  date={2005},
  pages={1529--1543},
}
\bib{LW}{article}{
   author={Lee, Yng Ing},
   author={Wang, Ai Nung},
   title={Estimate of $\lambda_2-\lambda_1$ on spheres},
   journal={Chinese J. Math.},
   volume={15},
   date={1987},
   pages={95--97},
}
\bib{Sakai}{book}{
  author={Sakai, Takashi},
  title={Riemannian geometry},
  series={Translations of Mathematical Monographs},
  volume={149},
  note={Translated from the 1992 Japanese original by the author},
  publisher={American Mathematical Society, Providence, RI},
  date={1996},
  pages={xiv+358},
 }
\bib{Shih}{article}{
  author={Shih, Ying},
  title={A counterexample to the convexity property of the first
  eigenfunction on a convex domain of negative curvature},
  journal={Comm. Partial Differential Equations},
  volume={14},
  date={1989},
  pages={867--876},
}
\bib{Vi}{book}{
   author={Villani, C\'{e}dric},
   title={Optimal transport},
   series={Grundlehren der mathematischen Wissenschaften [Fundamental
   Principles of Mathematical Sciences]},
   volume={338},
   note={Old and new},
   publisher={Springer-Verlag, Berlin},
   date={2009},
   pages={xxii+973},
   isbn={978-3-540-71049-3},
   review={\MR{2459454}},
   doi={10.1007/978-3-540-71050-9},
}
\end{biblist}
\end{bibdiv}
\end{document}